\documentclass[10pt]{amsart}
\usepackage{amsmath, latexsym, amsthm, amsfonts,bm,amssymb} 
\usepackage{appendix}
\usepackage{natbib} 
\usepackage{url} 
\usepackage{enumerate}
\usepackage{mathtools} 

\bibpunct{(}{)}{;}{a}{}{,} 

\theoremstyle{plain}
\newtheorem{thm}{Theorem}
\newtheorem{prop}{Proposition}
\newtheorem{lemma}{Lemma}

\newtheorem{defin}{Definition}
\newtheorem{assump}{Assumption}
\newtheorem{appxlem}{Lemma}[section]

\theoremstyle{remark}
\newtheorem{rem}{Remark}
\newtheorem{exam}{Example}



\def\ex{{\rm {\mathbb E\,}}}

\allowdisplaybreaks

\begin{document}

\title[Non-parametric Bayesian drift estimation]{Non-parametric Bayesian drift estimation for stochastic differential equations}

\author{Shota Gugushvili}
\address{Mathematical Institute\\
Leiden University\\
P.O. Box 9512\\
2300 RA Leiden\\
The Netherlands}
\email{shota.gugushvili@math.leidenuniv.nl}

\author{Peter Spreij}
\address{Korteweg-de Vries Institute for Mathematics\\
Universiteit van Amsterdam\\
PO Box 94248\\
1090 GE Amsterdam\\
The Netherlands}
\email{spreij@uva.nl}

\subjclass[2000]{Primary: 62G20, Secondary: 62M05}

\keywords{Drift coefficient; Non-parametric Bayesian estimation; Posterior consistency; Stochastic differential equation}

\begin{abstract}

We consider non-parametric Bayesian estimation of the drift coefficient of a one-dimensional stochastic differential equation from discrete-time observations on the solution of this equation. Under suitable regularity conditions that are weaker than those previosly suggested in the literature, we establish posterior consistency in this context. Furthermore, we show that posterior consistency extends to the multidimensional setting as well, which, to the best of our knowledge, is a new result in this setting.

\end{abstract}

\date{\today}

\maketitle

\section{Introduction}
\label{intro}
Consider the $d$-dimensional stochastic differential equation
\begin{equation}
\label{sde}
\mathrm{d}X_t=b(X_t)\mathrm{d}t+\mathrm{d}W_t
\end{equation}
driven by a $d$-dimensional Brownian motion $W,$ and assume that it has a unique (in the sense of the probability law) non-exploding weak solution. One can start with a coordinate mapping process $X$ (that is $X_t(\omega)=\omega(t)$) on the canonical space $(\mathcal{C}(\mathbb{R}_{+}),\mathcal{B}(\mathcal{C}(\mathbb{R}_{+})))$ of continuous functions $\omega:\mathbb{R}_{+}\rightarrow\mathbb{R}^d,$ a flow of sigma-fields $\{ \mathcal{F}_t^X \}$ and the $d$-dimensional Wiener measure $Q$ on $(\mathcal{C}(\mathbb{R}_{+}),\mathcal{B}(\mathcal{C}(\mathbb{R}_{+}))),$ and then, as is well-known (see e.g.\ Proposition 3.6 and Remark 3.7 on p.\ 303 in \cite{karatzas}), under suitable conditions on the drift coefficient $b$ and for any fixed initial distribution $\mu$ one can obtain a weak solution  $(X,W),$ $(\mathcal{C}(\mathbb{R}_{+}),\mathcal{F},P^{\mu}_b),$ $\{\mathcal{F}_t\}$ to \eqref{sde} through the Girsanov theorem. The filtration $\{\mathcal{F}_t\}$ can be made to satisfy the usual conditions by suitably augmenting and completing the filtration $\{\mathcal{F}_t^X\},$ cf.\ Remark 3.7 on p.\ 303 in \cite{karatzas}. Henceforth we will assume that we are in this canonical setup. We will also assume that $X$ is ergodic with a unique ergodic distribution $\mu_b$ and is in fact initialised at $\mu_b,$ so that $\mu=\mu_b.$ Furthermore, we will abbreviate $P^{\mu_b}_b$ to $P_b.$

Suppose that the drift coefficient $b=(b_1,\ldots,b_d)$ belongs to some non-parametric class. Denote by $b_0=(b_{0,1},\ldots,b_{0,d})$ the true drift coefficient and assume that corresponding to it a sample $X_0,X_{\Delta},X_{2\Delta},\ldots,X_{n\Delta}$ is given. The goal is to estimate $b_0$ non-parametrically. The problem of non-parametric estimation of $b_0$ from discrete-time observations has received considerable attention in the literature. For frequentist approaches to the problem see for instance \cite{comte}, \cite{reiss} and \cite{jacod} in the one-dimensional case ($d=1$) and \cite{dalalyan} and \cite{schmisser} in the general multidimensional case ($d\geq 1$). However, a non-parametric Bayesian approach to estimation of $b_0$ is also possible, see e.g.\ \cite{schauer}, \cite{meulen} and \cite{zanten12}. In particular, under appropriate assumptions on the drift coefficient $b,$ the weak solution to \eqref{sde} will admit transition densities $p_b(t,x,y),$ and employing the Markov property, the likelihood corresponding to the observations $X_{i\Delta}$'s can be written as
\begin{equation}
\label{likelihood}
\pi_b(X_0)\prod_{i=1}^n p_b(\Delta,X_{(i-1)\Delta},X_{i\Delta}),
\end{equation}
where $\pi_b$ denotes a density of the distribution $\mu_b$ of $X_0$ (under our conditions $\pi_b$ and $p_b$ will be strictly positive and finite, see Sections \ref{main} and \ref{main2} for details). A Bayesian would put a prior $\Pi$ on the class of drift coefficients, say $\mathcal{X},$ and obtain a posterior measure of any measurable set $B\subset\mathcal{X}$ through Bayes' formula
\begin{equation}
\label{bayes}
\Pi(B|X_0,\ldots,X_{n\Delta})=\frac{\int_B \pi_b(X_0)\prod_{i=1}^n p_b(\Delta,X_{(i-1)\Delta},X_{i\Delta}) \Pi(\mathrm{d}b)}{ \int_{\mathcal{X}} \pi_b(X_0)\prod_{i=1}^n p_b(\Delta,X_{(i-1)\Delta},X_{i\Delta}) \Pi(\mathrm{d}b) }.
\end{equation}
Here we tacitly assume suitable measurability of the integrands, so that the integrals in \eqref{bayes} are well-defined. In the Bayesian paradigm, posterior encapsulates all the information required for inferential purposes. Once posterior is available, one can proceed with computation of Bayes point estimates, credible sets and other quantities of interest in Bayesian statistics.

It has been argued convincingly in \cite{diaconis} and elsewhere that a desirable property of a Bayes procedure is posterior consistency. In our context this will mean that for every neighbourhood (in a suitable topology) $U_{b_0}$ of $b_0,$
\begin{equation*}
\Pi(U_{b_0}^c|X_0,\ldots,X_{n\Delta})\rightarrow 0, \quad  \text{${P}_{b_0}$-a.s.}
\end{equation*}
as $n\rightarrow\infty$ (see Sections \ref{main} and \ref{main2} for details). That is, roughly speaking, a consistent Bayesian procedure asymptotically puts posterior mass equal to one on every fixed neighbourhood of the true parameter: the posterior concentrates around the true parameter. In an infinite-dimensional setting, such as the one we are dealing with, posterior consistency is a subtle property that depends in an essential way on a specification of the prior, see e.g.\ \cite{diaconis}. Note also that the notion of posterior consistency depends on the topology on  $\mathcal{X}.$ Ideally one would like to establish posterior consistency in strong topologies. An implication of posterior consistency is that even though two Bayesians might start with two different priors, the role of the prior in their inferential conclusions will asymptotically, with the sample size growing indefinitely, wash out, and the two will eventually agree. Furthermore, posterior consistency also implies that the centre (in an appropriate sense) of the posterior distribution is a consistent (in the frequentist sense) estimator of the true parameter. For an introductory treatment of posterior consistency see \cite{wasserman}.

In the context of discretely observed scalar diffusion processes given as solutions to stochastic differential equations, posterior consistency has been recently addressed in \cite{meulen}, while the case when a continuous record of observations from a scalar diffusion process is avaiable was covered under various setups in \cite{meulen0}, \cite{panzar} and \cite{
pokern}, where in particular the contraction rates of the posterior were derived. The techniques used in the latter three papers are of little use in the case of discrete observations. The proof of posterior consistency in \cite{meulen} is based on the use of martingale arguments in a fashion similar to \cite{tang}, see also \cite{ghosal06}. The latter paper deals with posterior consistency for estimation of the transition density of an ergodic Markov process. The idea of using martingale arguments in the proofs of consistency of nonparametric Bayesian procedures goes back to \cite{walker0} and \cite{walker} in the i.i.d.\ setting. On the other hand, a similarity between the arguments used in the proof of posterior consistency in \cite{tang} and \cite{meulen} is to a considerable extent on a conceptual level only: conditions for posterior consistency in \cite{tang} involve conditions on transition densities that typically cannot be transformed into conditions on the drift coefficients, because transition densities associated with stochastic differential equations are usually unknown in explicit form. Furthermore, in the setting of \cite{meulen}, who deal with ergodic and strictly stationary scalar diffusion processes (in particular, $X_0$ is initialised at the ergodic distribution of the process $X$), one cannot assume that the density $\pi_{b_0}$ of $X_0$ is known (as done on p.\ 1714 in \cite{tang}), for that would completely determine the unknown drift coefficient $b_0.$

The assumption on the class of drift coefficients in Theorem 3.5 of \cite{meulen} (the latter deals with posterior consistency), namely uniform boundedness of the drift coefficients, is quite restrictive in that it excludes even such a prototypical example of a stochastic differential equation as the Langevin equation (here we assume $d=1$)
\begin{equation}
\label{ou}
\mathrm{d}X_t=-\beta X_t \mathrm{d}t+ \sigma \mathrm{d}W_t,\\
\end{equation}
where $\beta$ and $\sigma$ are two constants. A solution to \eqref{ou} is called an Ornstein-Uhlenbeck process, see Example 6.8 on p.~358 in \cite{karatzas} and p.\ 397 there. Hence, there is room for improvement.

In this work we will show that under suitable conditions posterior consistency in the one-dimensional case still holds for the class of unbounded drift coefficients satisfying the linear growth condition. In particular, the case of the Langevin equation is covered. In our proof of posterior consistency we follow the same train of thought as initiated in \cite{walker0} and \cite{walker}, at the same time making use of ideas from \cite{tang} and especially from \cite{meulen}. According to \cite{meulen}, p.\ 51, the boundedness condition on the drift coefficients cannot be avoided in their approach due to technical reasons. Our analysis and contribution to the literature, however, shows that given a willingness to assume some reasonable and classical conditions on the drift coefficients, the case of unbounded drift coefficients can also be covered via techniques similar to those in \cite{meulen}. Perhaps more importantly, under some extra, but standard assumptions in non-parametric inference for multidimensional stochastic differential equations (cf.\ \cite{dalalyan} and \cite{schmisser}), we show that our analysis in the one-dimensional case extends to the multidimensional setting as well. To the best of our knowledge, this is a new result in this context.

The rest of the paper is organised as follows: in the next section we state our main result in the one-dimensional case, while Section \ref{main2} deals with the general multidimensional case. In Section \ref{discussion} we provide a brief discussion on the obtained results. The proofs of the results from Sections \ref{main} and \ref{main2} are given in Section \ref{proofs}. Finally, Appendices \ref{appendix} and \ref{appendixB} contain several auxiliary statements used in Section \ref{proofs} together with their proofs.

\section{Posterior consistency: one-dimensional case}
\label{main}

In this section we consider the one-dimensional case ($d=1$). The class of drift coefficients we will be looking at will be a subset of the class $\widetilde{\mathcal{X}}(K)$ introduced below.

\begin{defin}
\label{driftclass}
The family $\widetilde{\mathcal{X}}(K)$ consists of Borel-measurable drift coefficients $b:\mathbb{R}\rightarrow\mathbb{R}$ possessing the following two properties:
\begin{enumerate}[(a)]
\item for some constant $K>0$ and $\forall b\in\widetilde{\mathcal{X}}(K),$ the linear growth condition
\begin{equation*}
|b(x)|\leq K (1+|x|)
\end{equation*}
is satisfied, and
\item for each $b\in\widetilde{\mathcal{X}}(K)$ there exist two constants $r_b>0$ and $M_b>0,$ such that
\begin{equation*}
b(x) \operatorname{sgn}(x) \leq -r_b, \quad \forall |x|\geq M_b
\end{equation*}
holds.
\end{enumerate}
\end{defin}

\begin{rem}
\label{driftrem}
Analogously to considering $L_p$-spaces instead of
$\mathcal{L}_p$-spaces, we will identify two functions $b_1$ and $b_2$
in $\widetilde{X}(K),$ if $b_1=b_2$ Lebesgue a.e.\ \qed
\end{rem}

\begin{rem}
\label{remdriftclass1}
The class $\widetilde{\mathcal{X}}(K)$ is such that the case of the Langevin equation \eqref{ou} with $\sigma=1$ is covered for parameter $\beta$ ranging in the interval $(0,K].$ \qed
\end{rem}

The goal of the following proposition is to show that when $b\in\widetilde{\mathcal{X}}(K),$ a unique non-exploding weak solution to \eqref{sde} exists and has certain desirable properties. Although, strictly speaking, a weak solution is a triple $(X,W),$ $(\Omega,\mathcal{F},P^{\mu}_b),$ $\{\mathcal{F}_t\},$ in order to avoid cumbersome formulations, in the sequel we will at times take a liberty to call $X$ itself a weak solution. 

\begin{prop}
\label{classchi}
For each $b\in\widetilde{\mathcal{X}}(K),$ where $\widetilde{\mathcal{X}}(K)$ is defined in Definition \ref{driftclass},
\begin{enumerate}[(1)]
\item a unique non-exploding weak solution to \eqref{sde} exists,
\item the weak solution $X$ to \eqref{sde} is ergodic with unique ergodic distribution $\mu_b$ admitting a density $0<\pi_b(x)<\infty,x\in\mathbb{R}$ with respect to the Lebesgue measure, and
\item transition probabilities $P_b(t,x,\cdot)$ are absolutely continuous with respect to the Lebesgue measure with densities $0<p_b(t,x,y)<\infty,(t,x,y)\in(0,\infty)\times\mathbb{R}\times\mathbb{R}.$
\end{enumerate}
\end{prop}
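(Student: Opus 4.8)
The plan is to verify the three claims by appealing to classical criteria for one-dimensional diffusions with unit diffusion coefficient, exploiting the fact that the drift class $\widetilde{\mathcal{X}}(K)$ is tailor-made so that these criteria apply. First I would establish existence and uniqueness of a non-exploding weak solution. Since the diffusion coefficient is constant (equal to $1$) and the drift $b$ satisfies the linear growth condition in part (a) of Definition \ref{driftclass}, one can invoke the Girsanov-based construction already recalled in the Introduction (Proposition 3.6 and Remark 3.7, p.\ 303 in \cite{karatzas}): the Novikov-type condition needed to define the Girsanov density is guaranteed by linear growth of $b$ together with non-explosion, and non-explosion itself follows from linear growth via a standard Lyapunov/Feller-test argument. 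Weak uniqueness (uniqueness in law) then follows because any two weak solutions have laws mutually absolutely continuous with respect to Wiener measure with the same Girsanov density, hence coincide.

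Second, I would establish ergodicity and the existence of a strictly positive, finite invariant density. For a scalar diffusion $\mathrm{d}X_t = b(X_t)\,\mathrm{d}t + \mathrm{d}W_t$ the scale function and speed measure are explicit: with $s'(x) = \exp\bigl(-2\int_0^x b(u)\,\mathrm{d}u\bigr)$ the scale density and $m(x) = 1/s'(x)$ the speed density. The key computation is to use part (b) of Definition \ref{driftclass}: for $|x| \geq M_b$ we have $b(x)\operatorname{sgn}(x) \leq -r_b < 0$, so $-2\int_0^x b(u)\,\mathrm{d}u$ grows at least linearly as $|x|\to\infty$, which forces $s(+\infty) = +\infty$, $s(-\infty) = -\infty$ (both boundaries are inaccessible/natural and not attracting), and simultaneously makes the speed measure finite, $\int_{\mathbb{R}} m(x)\,\mathrm{d}x < \infty$. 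By the classical theory (e.g.\ the recurrence/ergodicity classification for scalar diffusions), this yields positive recurrence, hence a unique invariant probability measure $\mu_b$ with density $\pi_b(x) = m(x)/\int_{\mathbb{R}} m(u)\,\mathrm{d}u$, and ergodicity of the stationary solution. Strict positivity and finiteness of $\pi_b$ on all of $\mathbb{R}$ are immediate because $s'$ is continuous and strictly positive (here the linear growth bound (a) is what keeps $s'$, hence $m$, bounded away from $0$ and $\infty$ on compacts, and the exponential growth of $s'$ from (b) is what keeps $m$ integrable at $\pm\infty$).

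Third, for the transition densities I would note that absolute continuity of $P_b(t,x,\cdot)$ with a strictly positive finite density follows again from the Girsanov representation: $p_b(t,x,y)$ can be written as an expectation against the Gaussian (Wiener) transition density $q(t,x,y) = (2\pi t)^{-1/2}\exp(-(y-x)^2/(2t))$ of the Girsanov likelihood ratio, conditioned on the endpoint. More concretely, $P_b(t,x,\cdot) \ll Q(t,x,\cdot)$ with a Radon--Nikodym derivative that is an a.s.\ strictly positive conditional expectation of a strictly positive random variable, so $0 < p_b(t,x,y) < \infty$ for all $(t,x,y)$; continuity/finiteness can also be obtained from standard parabolic PDE smoothing estimates since the linear growth of $b$ places us in a setting where the transition density exists and is smooth. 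I expect the main obstacle to be the careful justification of non-explosion together with the Novikov condition for the Girsanov change of measure — i.e.\ confirming that linear growth of $b$ really does suffice here — and the bookkeeping in the scale/speed computation showing that condition (b) simultaneously kills attraction at the boundaries and makes the speed measure finite; the positivity and finiteness assertions, by contrast, are essentially by inspection of the explicit formulae.
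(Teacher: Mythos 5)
Your treatment of parts (1) and (2) follows essentially the same route as the paper: existence and weak uniqueness of a non-exploding solution from the linear growth condition via the Girsanov construction in \cite{karatzas}, and ergodicity via the scale function and speed measure, with condition (b) of Definition \ref{driftclass} producing the exponential bound $\exp\left(2\int_0^x b(y)\,\mathrm{d}y\right)\leq c\,e^{-r_b|x|}$ that simultaneously forces $s_b(\pm\infty)=\pm\infty$ and makes the speed measure finite, and with the invariant density given by the normalised speed density. (The paper phrases this step through the transformed driftless process $\widetilde{X}=s_b(X)$ and Skorokhod's criterion, but the computation is the same scale/speed argument you give.)

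For part (3), however, your argument has a genuine gap. The Girsanov representation gives, for each fixed $(t,x)$, absolute continuity of $P_b(t,x,\cdot)$ with respect to the law of $x+W_t$, and the conditional-expectation formula produces a density that is finite and strictly positive only for Lebesgue-almost every $y$: a Radon--Nikodym derivative is defined only up to null sets, and $\ee[L\mid x+W_t=y]$ has no canonical pointwise meaning unless you first establish, say, continuity in $y$, which is exactly the nontrivial part. The proposition asserts $0<p_b(t,x,y)<\infty$ for \emph{all} $(t,x,y)$, and this everywhere-defined positive finite version is what is used when the likelihood \eqref{likelihood} is evaluated at the observed data points. Your fallback, ``standard parabolic PDE smoothing,'' is not available here, because elements of $\widetilde{\mathcal{X}}(K)$ are merely Borel measurable (no continuity or H\"older regularity is imposed in Definition \ref{driftclass}) and unbounded, so classical parabolic regularity theory does not apply directly. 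The paper closes this point differently: the equalities $s_b(\pm\infty)=\pm\infty$ together with Proposition 5.22(a) on p.~345 in \cite{karatzas} give recurrence, hence $X$ is a regular one-dimensional diffusion, and Theorem 50.11 in \cite{williams} then yields continuous, strictly positive, finite transition densities with respect to the speed measure $m_b$; since the speed density is itself strictly positive and finite by part (2), the stated Lebesgue densities follow at every $(t,x,y)$. You would need either to import that regular-diffusion result or to carry out a genuine bridge-representation argument with continuity in $y$ to repair your version of part (3).
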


\begin{rem}
\label{remdriftclass2}
The contents of Proposition \ref{classchi} are standard, but perhaps not available at one place in the literature. The linear growth condition (a) in Definition \ref{driftclass} is a standard assumption to ensure existence of a unique non-exploding weak solution to \eqref{sde}, see e.g.\ Proposition 3.6 and Remark 3.7 on p.\ 303, Theorem 5.15 on p.~341 and Remark 5.19 on p.~342 in \cite{karatzas}. In fact this assumption allows one to construct a weak solution via the Girsanov theorem as mentioned in the beginning of Section \ref{intro}. Property (b) in Definition \ref{driftclass} is a classical assumption ensuring existence of a unique ergodic distribution for $X,$ see e.g.\ Assumption $(H^*)$ on p.\ 548 in \cite{flor}. Finally, the two properties in Definition \ref{driftclass} also yield a short proof of part (3) of Proposition \ref{classchi}. See Section \ref{proofs} for more details. \qed
\end{rem}

\begin{rem}
\label{remdriftclass3}
Apart of its use in the proof of Proposition \ref{classchi}, the linear growth condition (a) on the drift coefficients in Definition \ref{driftclass} is also used e.g.\ when establishing formula \eqref{expmartingale} in the proof of Lemma \ref{lemma6.1} in Appendix \ref{appendix}, and more generally in those instances where we invoke the Girsanov theorem. Furthermore, property (b) from Definition \ref{driftclass} ensures that for $b\in\widetilde{\mathcal{X}}(K)$ the density $\pi_b$ of the ergodic distribution $\mu_b$ of $X$ decays exponentially fast at infinity, cf.\ formula \eqref{expfast}. Hence $\mu_b$ has moments of all orders and for any $b_1,b_2\in\widetilde{\mathcal{X}}(K),$ the Kullback-Leibler divergence ${\rm{K}}(\mu_{b_1},\mu_{b_2})=\int_{\mathbb{R}}\pi_{b_1}(x)\log(\pi_{b_1}(x)/\pi_{b_2}(x))\mathrm{d}x$ is finite. This comes in handy in the proof of Lemma \ref{lemma5.1} in Appendix \ref{appendix}. \qed
\end{rem}

\begin{rem}
\label{remlikelihood}
Positivity of $\pi_b$ and $p_b$ formally justifies rewriting the likelihood as in \eqref{likelihood} and allows us to employ the likelihood ratio $L_n(b)$ in the proof of Theorem \ref{mainthm}.\qed
\end{rem}

\begin{rem}
\label{remlikelihood2}
Measurability of the mapping $ b \mapsto p_b(t,x,y)$ is a subtle property essential in \eqref{bayes}, but it is difficult to ascertain it in a general setting. Therefore we will simply tacitly assume that all the quantities in \eqref{bayes} (or in other formulae where we integrate with respect to the prior) are suitably measurable. \qed
\end{rem}

Since the notion of posterior consistency depends on a topology on the class of drift coefficients under consideration, we have to introduce the latter first. We will base our topology on the transition operators $P_{\Delta}^b.$ Transition operators associated with \eqref{sde} and acting on the class of bounded measurable functions $f:\mathbb{R}\rightarrow\mathbb{R}$ are defined by
\begin{equation*}
P_t^bf(x)=\int_{\mathbb{R}}p_b(t,x,y)f(y)\mathrm{d}y.
\end{equation*}
We want our topology to separate distinct drift coefficients, which can be thought of as an identifiability condition. At the same time we want the posterior measure to concentrate on arbitrarily small neighbourhoods of the true parameter $b_0.$ Fortunately, this will be possible with our choice of topology, as it will have the required separation property. 

As it often happens in practice, it will be convenient in our case to define a topology not by directly specifying the open sets, but rather by specifying a subbase $\widetilde{\mathcal{U}}$ (for a notion of a subbase see e.g.\ p.\ 37 in \cite{dudley}).

\begin{defin}
\label{Ub}
Let $\nu$ be a finite Borel measure on $\mathbb{R}$ that assigns strictly positive mass to every non-empty open subset of $\mathbb{R},$ and let $\mathcal{C}_{bdd}(\mathbb{R})$ denote the class of all bounded continuous functions on $\mathbb{R}.$ For fixed $b\in\widetilde{\mathcal{X}}(K),f\in\mathcal{C}_{bdd}(\mathbb{R})$ and $\varepsilon>0,$ define
\begin{equation*}
U_{f,\varepsilon}^b=\{\widetilde{b}\in{\widetilde{\mathcal{X}}}(K):\| P_{\Delta}^{\widetilde{b}}f - P_{\Delta}^{{b}}f \|_{1,\nu}<\varepsilon\}.
\end{equation*}
Here $\|\cdot\|_{1,\nu}$ denotes the $L_1$-norm with respect to the measure $\nu.$
\end{defin}

The following definition specifies a topology on $\widetilde{\mathcal{X}}(K).$

\begin{defin}
\label{topology}
The topology $\widetilde{\mathcal{T}}$ on $\widetilde{\mathcal{X}}(K)$ is determined by the requirement that the family
\begin{equation*}
\widetilde{\mathcal{U}}=\{ U_{f,\varepsilon}^b:f\in\mathcal{C}_{bdd}(\mathbb{R}),\varepsilon>0,b\in\widetilde{\mathcal{X}}(K) \}
\end{equation*}
is a subbase for $\widetilde{\mathcal{T}}.$
\end{defin}

\begin{rem}
\label{remtopology-1}
The topology in Definition \ref{topology} clearly depends on the choice of the measure $\nu,$ but since $\nu$ is assumed to be fixed beforehand and its specific choice is not of great importance for subsequent developments, it is not reflected in our notation. \qed
\end{rem}

\begin{rem}
\label{remtopology0} The fact that Definition \ref{topology} is a valid definition follows from a standard result in general topology, Theorem 2.2.6 in \cite{dudley}. \qed
\end{rem}

\begin{rem}
\label{remtopology}
The topology in Definition \ref{topology} has already been employed in \cite{meulen}, who in that respect follow Section 6 in \cite{tang}. For a $\mathcal{C}^2$-function $f$ and a small $\Delta,$
\begin{equation*}
P_t^{b}f(x)-P_t^{\widetilde{b}}f(x)\approx \Delta (b(x)-\widetilde{b}(x)) f^{\prime}(x),
\end{equation*}
cf.\ p.\ 50 in \cite{meulen}. Hence for a small $\Delta,$ the topology $\widetilde{\mathcal{T}}$ in some sense resembles the topology induced by the $L_1(\nu)$-norm on $\widetilde{\mathcal{X}}(K).$ \qed
\end{rem}

In Lemma \ref{lemma3.2} given below we will show that the topology of Definition \ref{topology} has the Hausdorff property. This is perfectly sufficient for our purposes. For a notion of a Hausdorff space see e.g.\ p.\ 30 in \cite{dudley}.

\begin{lemma}
\label{lemma3.2}
The topological space $(\widetilde{\mathcal{X}}(K),\widetilde{\mathcal{T}})$ with $\widetilde{\mathcal{T}}$ as in Definition \ref{topology} is a Hausdorff space.
\end{lemma}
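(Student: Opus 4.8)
To establish that $(\widetilde{\mathcal{X}}(K),\widetilde{\mathcal{T}})$ is Hausdorff, the plan is to show that any two distinct drift coefficients $b_1,b_2\in\widetilde{\mathcal{X}}(K)$ can be separated by two disjoint basic open neighbourhoods drawn from the subbase $\widetilde{\mathcal{U}}$. Since a subbasic neighbourhood of $b$ has the form $U_{f,\varepsilon}^b$, it suffices to exhibit a single $f\in\mathcal{C}_{bdd}(\mathbb{R})$ and some $\varepsilon>0$ for which $U_{f,\varepsilon}^{b_1}$ and $U_{f,\varepsilon}^{b_2}$ are disjoint; and for that it is enough to show that $\|P_\Delta^{b_1}f-P_\Delta^{b_2}f\|_{1,\nu}>0$, i.e.\ that $P_\Delta^{b_1}f$ and $P_\Delta^{b_2}f$ differ on a set of positive $\nu$-measure (then take $\varepsilon$ to be half that $L_1(\nu)$-distance and invoke the triangle inequality). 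Because $\nu$ charges every non-empty open set, and because $P_\Delta^{b_i}f$ is continuous in $x$ (this follows from the smoothness/Feller properties of the transition semigroup guaranteed by Proposition \ref{classchi}, since $p_b(\Delta,x,y)$ is a genuine transition density), it is in fact enough to find one point $x_0$ and one bounded continuous $f$ with $P_\Delta^{b_1}f(x_0)\neq P_\Delta^{b_2}f(x_0)$.

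The crux is therefore an \emph{identifiability} statement: if $b_1\neq b_2$ in $\widetilde{\mathcal{X}}(K)$ (i.e.\ they differ on a set of positive Lebesgue measure, recalling Remark \ref{driftrem}), then their one-step transition operators $P_\Delta^{b_1}$ and $P_\Delta^{b_2}$ are not identical. I would argue this by contradiction: suppose $P_\Delta^{b_1}f=P_\Delta^{b_2}f$ for all $f\in\mathcal{C}_{bdd}(\mathbb{R})$, which (by a standard monotone-class / measure-determining argument, since $\mathcal{C}_{bdd}$ separates finite Borel measures on $\mathbb{R}$) forces $p_{b_1}(\Delta,x,\cdot)=p_{b_2}(\Delta,x,\cdot)$ as measures for every $x$, hence $p_{b_1}(\Delta,x,y)=p_{b_2}(\Delta,x,y)$ for a.e.\ $y$, for every $x$. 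One then has to propagate this from the fixed lag $\Delta$ to small times. The cleanest route is analytic: the transition density $p_b(t,x,y)$ solves Kolmogorov's backward equation with generator $L_b=\tfrac12\partial_{xx}+b(x)\partial_x$; matching transition densities at time $\Delta$ together with the semigroup/Chapman--Kolmogorov relations and smoothness in $t$ on $(0,\infty)$ lets one conclude $L_{b_1}g=L_{b_2}g$ for a suitable class of test functions $g$, whence $(b_1(x)-b_2(x))g'(x)=0$ for all such $g$, forcing $b_1=b_2$ Lebesgue-a.e. Alternatively, and perhaps more elementarily, one can use the local behaviour of the diffusion: as $t\downarrow 0$, $\frac1t\ee_x[f(X_t)-f(x)]\to L_bf(x)$ for $f\in\mathcal{C}^2_c$, and since the finite-dimensional laws — in particular the law of $X_\Delta$ given $X_0=x$ under each $b_i$ — determine the law of the whole path (the martingale problem for $L_b$ is well-posed under the linear-growth condition (a) of Definition \ref{driftclass}, cf.\ the existence/uniqueness in Proposition \ref{classchi}), equality of the time-$\Delta$ transition kernels forces equality of the path laws, hence of the generators, hence of the drifts a.e.

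I expect the identifiability step — upgrading ``same transition kernel at lag $\Delta$'' to ``same drift a.e.''\ — to be the main obstacle; everything else (continuity of $x\mapsto P_\Delta^b f(x)$, positivity of $\nu$ on open sets, the triangle-inequality argument producing disjoint neighbourhoods, and the reduction to a single separating $f$) is routine. A convenient way to handle the obstacle rigorously is to invoke well-posedness of the martingale problem (or, equivalently, uniqueness in law of the weak solution, already asserted in part (1) of Proposition \ref{classchi}) to reduce to: two diffusions of the form \eqref{sde} with the same one-dimensional transition probabilities $P_b(\Delta,x,\cdot)$ over all starting points $x$ must coincide in law, and then differentiate the semigroup at $t=0^+$ to recover $b$. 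Since $\widetilde{\mathcal{X}}(K)$ consists precisely of equivalence classes of drifts modulo Lebesgue-null sets (Remark \ref{driftrem}), recovering $b$ a.e.\ is exactly recovering the point of $\widetilde{\mathcal{X}}(K)$, and the Hausdorff property follows.
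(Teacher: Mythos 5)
Your overall framework is exactly the one the paper (following van der Meulen and van Zanten) uses: reduce the Hausdorff property to the identifiability statement that $b_1\neq b_2$ in $\widetilde{\mathcal{X}}(K)$ implies $P_\Delta^{b_1}\neq P_\Delta^{b_2}$, then separate $b_1$ and $b_2$ by the disjoint subbasic sets $U^{b_1}_{f,\varepsilon}$ and $U^{b_2}_{f,\varepsilon}$ with $\varepsilon$ half the $L_1(\nu)$-distance, using that $\mathcal{C}_{bdd}(\mathbb{R})$ is measure-determining, that $x\mapsto P_\Delta^b f(x)$ is continuous, and that $\nu$ charges open sets. Those routine parts are fine. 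The genuine gap is in the step you yourself flag as the crux. Knowing $P_\Delta^{b_1}=P_\Delta^{b_2}$ at the \emph{single} lag $\Delta$ gives you, via Chapman--Kolmogorov, equality of the kernels only at the times $m\Delta$, $m\in\mathbb{N}$; it does not give equality of the semigroups at small times. Hence you cannot differentiate at $t=0^+$ to recover the generator, and the appeal to well-posedness of the martingale problem is circular here: well-posedness says the generator determines the path law, not that the time-$\Delta$ marginal kernel does. Your phrase ``equality of the time-$\Delta$ transition kernels forces equality of the path laws'' is precisely the assertion that needs proof, and as stated it only yields the finite-dimensional laws on the $\Delta$-grid. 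Similarly, the analytic route (``matching at time $\Delta$ plus smoothness in $t$ gives $L_{b_1}g=L_{b_2}g$'') is asserted, not argued; propagating information from one fixed time to $t\downarrow 0$ is exactly the hard point.

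The paper closes this gap by going to \emph{large} times instead of small ones: assuming $P_\Delta^{b_1}=P_\Delta^{b_2}$, iterate to get $(P_\Delta^{b_i})^m=P_{m\Delta}^{b_i}$, let $m\to\infty$, and use ergodicity to conclude $\int f\,\pi_{b_1}=\int f\,\pi_{b_2}$ for all bounded (measurable) $f$, hence $\pi_{b_1}=\pi_{b_2}$ a.e.\ and, by continuity of these densities, everywhere. In the one-dimensional setting the invariant density determines the drift explicitly, $\pi_b(x)=m_b(\mathbb{R})^{-1}\exp\bigl(2\int_0^x b(y)\,\mathrm{d}y\bigr)$ (part (2) of Proposition \ref{classchi}), so taking logarithms and differentiating a.e.\ yields $b_1=b_2$ Lebesgue-a.e., contradicting $b_1\neq b_2$ in the sense of Remark \ref{driftrem}. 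This is the analogue of Lemma 3.1 in the cited reference that the paper's proof invokes (and spells out in the multidimensional case as Lemma \ref{lemma3.1b}, where the potential $V_b$ plays the role of $\int_0^x b$). So: your reduction is correct and matches the paper, but the identifiability step needs to be replaced by the ergodicity-plus-invariant-density argument (or some other genuine argument, e.g.\ one exploiting reversibility and self-adjointness of the semigroup), since the small-time/generator route does not follow from knowledge of the kernel at the single time $\Delta$.
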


We are ready to give the definition of posterior consistency used in the present work. For a definition of a neighbourhood used in it, see p.\ 26 in \cite{dudley}.

\begin{defin}
\label{defconsistency}
Let the prior $\Pi$ be defined on a set $\mathcal{X}(K)\subset\widetilde{\mathcal{X}}(K)$ and let $b_0\in\mathcal{X}(K).$ We say that posterior consistency holds at $b_0,$ if for every neighbourhood $U_{b_0}$ of $b_0$ in the relative topology ${\mathcal{T}}=\{ A \cap \mathcal{X}(K):A\in\widetilde{\mathcal{T}} \}$ (with $\widetilde{\mathcal{T}}$ as in Definition \ref{topology}) we have
\begin{equation*}
\Pi(U_{b_0}^c|X_0,\ldots,X_{n\Delta}) \rightarrow 0, \quad \text{$P_{b_0}$-a.s.}
\end{equation*}
as $n\rightarrow\infty.$
\end{defin}

We need yet another definition (the definition of a uniformly equicontinuous family of functions appearing in it can be found on p.\ 51 in \cite{dudley}).

\begin{defin}
\label{equicontinuity}
A family $\mathfrak{F}$ of functions $f:\mathbb{R}\rightarrow\mathbb{R}$ is called locally uniformly equicontinuous, if for any compact set $F\subset\mathbb{R},$ the restrictions $f|_{F}$ of the functions $f\in\mathfrak{F}$ to $F$ form a uniformly equicontinuous family of functions; i.e., for every $\varepsilon>0,$ there exists a $\delta>0,$ such that the inequality
\begin{equation*}
\sup_{f\in\mathfrak{F}}\sup_{\substack{ x,y\in F \\|x-y|<\delta}} |f(x)-f(y)|<\varepsilon
\end{equation*}
holds.
\end{defin}

The following will be a collection of drift coefficients we will be looking at in our first main result, Theorem \ref{mainthm}.

\begin{defin}
\label{chik}
Let $\mathcal{X}(K)$ be the collection of drift coefficients, such that $\mathcal{X}(K)\subset\widetilde{\mathcal{X}}(K)$ and $\mathcal{X}(K)$ is a locally uniformly equicontinuous family of functions.
\end{defin}

\begin{rem}
\label{remequicontinuity}
Functions $f$ belonging to some locally uniformly equicontinuous family $\mathfrak{F}$ of functions  are obviously continuous. If the family $\mathfrak{F}$ is such that for every compact set $F\subset\mathbb{R}$ the restrictions $f|_{F}$ of the functions $f\in\mathfrak{F}$ to $F$ uniformly satisfy a H\"older condition (i.e.\ a H\"older condition with the same H\"older constants), then $\mathfrak{F}$ is a locally uniformly equicontinuous family of functions. \qed
\end{rem}

We summarise our assumptions.

\begin{assump}
\label{standing}
Assume that
\begin{enumerate}[(a)]
\item a unique in law non-exploding weak solution to \eqref{sde} corresponding to each $b\in\mathcal{X}(K)$ is initialised at the ergodic distribution $\mu_{b},$
\item $b_0\in\mathcal{X}(K)$ denotes the true drift coefficient,
\item a discrete-time sample $X_0,\ldots,X_{n\Delta}$ from the solution to \eqref{sde} corresponding to $b_0$ is available (we assume that we are in the canonical setup as in Section \ref{intro}), and finally, $\Delta$ is fixed and independent of $n.$
\end{enumerate}
\end{assump}

The following is our first main result.

\begin{thm}
\label{mainthm}
Let Assumption \ref{standing} hold and suppose that the prior $\Pi$ on $\mathcal{X}(K)$ is such that
\begin{equation}
\label{priorcondition}
\Pi(b\in{\mathcal{X}}(K):\|b-b_0\|_{2,\mu_{b_0}}<\varepsilon)>0, \quad \forall \varepsilon>0.
\end{equation}
Here $\|\cdot\|_{2,\mu_{b_0}}$ denotes the $L_2$-norm with respect to measure $\mu_{b_0}.$ Then posterior consistency as in Definition \ref{defconsistency} holds.
\end{thm}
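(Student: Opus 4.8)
The plan is to use the classical numerator/denominator decomposition of the posterior and bound the two pieces separately, in the spirit of \cite{walker0,walker} as adapted to ergodic Markov chains in \cite{tang,meulen}. Write $L_n(b)=\prod_{i=1}^n p_b(\Delta,X_{(i-1)\Delta},X_{i\Delta})/p_{b_0}(\Delta,X_{(i-1)\Delta},X_{i\Delta})$ for the likelihood ratio, which is finite and strictly positive by Proposition \ref{classchi} (cf.\ Remark \ref{remlikelihood}); then for measurable $A\subset\mathcal{X}(K)$ one has $\Pi(A\mid X_0,\dots,X_{n\Delta})=N_n(A)/D_n$, where $N_n(A)=\int_A(\pi_b(X_0)/\pi_{b_0}(X_0))\,L_n(b)\,\Pi(\mathrm{d}b)$ and $D_n=N_n(\mathcal{X}(K))$. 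A first reduction: since $\{U^b_{f,\varepsilon}\}$ is a subbase for $\widetilde{\mathcal{T}}$, any neighbourhood $U_{b_0}$ of $b_0$ contains a finite intersection of subbasic sets and, after recentring these at $b_0$ by the triangle inequality for $\|\cdot\|_{1,\nu}$, contains some $\bigcap_{j=1}^m U^{b_0}_{f_j,\varepsilon_j}$; hence $\Pi(U_{b_0}^c\mid\cdot)\le\sum_{j=1}^m\Pi(A_{f_j,\varepsilon_j}\mid\cdot)$ with $A_{f,\varepsilon}=\{b\in\mathcal{X}(K):\|P^b_\Delta f-P^{b_0}_\Delta f\|_{1,\nu}\ge\varepsilon\}$, so it is enough to prove $\Pi(A_{f,\varepsilon}\mid X_0,\dots,X_{n\Delta})\to 0$ $P_{b_0}$-a.s.\ for each fixed $f\in\mathcal{C}_{bdd}(\mathbb{R})$ and $\varepsilon>0$.

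For the denominator I would show that $e^{n\beta}D_n\to\infty$ $P_{b_0}$-a.s.\ for every $\beta>0$. Fixing $b$, the sequence $\log(p_b/p_{b_0})(\Delta,X_{(i-1)\Delta},X_{i\Delta})$ is stationary under $P_{b_0}$ (Assumption \ref{standing}), integrable with mean $-\kappa(b,b_0)$, where $\kappa(b,b_0)=\int_{\mathbb{R}}\pi_{b_0}(x)\,\mathrm{K}(p_{b_0}(\Delta,x,\cdot),p_b(\Delta,x,\cdot))\,\mathrm{d}x$; by Lemma \ref{lemma5.1} this Kullback--Leibler rate is finite and controlled by $\|b-b_0\|_{2,\mu_{b_0}}$, so $B_\delta=\{b:\|b-b_0\|_{2,\mu_{b_0}}<\delta\}$ has $\sup_{B_\delta}\kappa(\cdot,b_0)\le\beta/2$ for small $\delta$, while $\Pi(B_\delta)>0$ by \eqref{priorcondition}. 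Restricting the integral defining $D_n$ to $B_\delta$, normalising by $\Pi(B_\delta)$ and applying Jensen's inequality to $\log$ gives $n^{-1}\log D_n\ge n^{-1}\log\Pi(B_\delta)+n^{-1}\int_{B_\delta}\log(\pi_b(X_0)/\pi_{b_0}(X_0))\,\Pi(\mathrm{d}b\mid B_\delta)+n^{-1}\int_{B_\delta}\log L_n(b)\,\Pi(\mathrm{d}b\mid B_\delta)$; the first term vanishes, the second is $o(1)$ a.s.\ (the exponential decay of $\pi_b$ from property (b) of Definition \ref{driftclass}, cf.\ Remark \ref{remdriftclass3} and Lemma \ref{lemma6.1}, furnishing an integrable dominating function), and, via Fubini, the ergodic theorem and dominated convergence, the third tends to $-\int_{B_\delta}\kappa(b,b_0)\,\Pi(\mathrm{d}b\mid B_\delta)\ge-\beta/2$. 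Hence $\liminf_n n^{-1}\log D_n\ge-\beta/2$, and since $\beta>0$ was arbitrary this is the claim.

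The numerator is the hard part: I aim for a fixed $c=c(f,\varepsilon)>0$ with $N_n(A_{f,\varepsilon})\le e^{-cn}$ eventually, $P_{b_0}$-a.s. The structural input is that $\mathcal{X}(K)$, being locally uniformly equicontinuous (Definition \ref{chik}) and, by the linear growth bound (a) of Definition \ref{driftclass}, locally uniformly bounded, is totally bounded for the topology of uniform convergence on compacta by Arzel\`a--Ascoli, and that $b\mapsto P^b_\Delta f$ is continuous for that topology (as follows from standard stability of SDEs under the linear growth condition). Consequently $A_{f,\varepsilon}$ is covered by finitely many sets $B_1,\dots,B_r\subset\mathcal{X}(K)$ of oscillation so small that, for every probability measure $\rho$ on $B_k$, the mixed transition density $\bar p=\int_{B_k}p_b(\Delta,\cdot,\cdot)\,\rho(\mathrm{d}b)$ still has $\int_{\mathbb{R}}\bar p(\Delta,\cdot,y)f(y)\,\mathrm{d}y$ at $\|\cdot\|_{1,\nu}$-distance $\ge\varepsilon/2$ from $P^{b_0}_\Delta f$; this partitioning is essential, since such separation is not preserved under arbitrary mixtures over $A_{f,\varepsilon}$. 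For each $k$ I form the one-step predictive densities $q^{(k)}_i(y\mid X_0,\dots,X_{(i-1)\Delta})=\int_{B_k}p_b(\Delta,X_{(i-1)\Delta},y)\,\Pi(\mathrm{d}b\mid B_k;X_0,\dots,X_{(i-1)\Delta})$ and the process $R^{(k)}_n=\prod_{i=1}^n q^{(k)}_i(X_{i\Delta}\mid\cdot)/p_{b_0}(\Delta,X_{(i-1)\Delta},X_{i\Delta})$, for which $N_n(B_k)$ equals an $n$-independent, $P_{b_0}$-a.s.\ finite, data-dependent factor times $R^{(k)}_n$. Since $\ee_{b_0}[(R^{(k)}_n/R^{(k)}_{n-1})^{1/2}\mid X_0,\dots,X_{(n-1)\Delta}]$ is the Hellinger affinity between $q^{(k)}_n(\cdot\mid\cdot)$ and $p_{b_0}(\Delta,X_{(n-1)\Delta},\cdot)$, the process $(R^{(k)}_n)^{1/2}$ is a nonnegative supermartingale, which already bounds $N_n(B_k)$; what requires work is geometric decay of $\ee_{b_0}[(R^{(k)}_n)^{1/2}]$, whence a.s.\ exponential decay of $R^{(k)}_n$ by a Borel--Cantelli argument and $N_n(A_{f,\varepsilon})\le\sum_{k=1}^r N_n(B_k)\to0$ exponentially.

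I expect the main obstacle to be exactly this geometric-decay step: one must convert the topological separation $\|\int\bar p(\Delta,\cdot,y)f(y)\,\mathrm{d}y-P^{b_0}_\Delta f\|_{1,\nu}\ge\varepsilon/2$ (for mixtures $\bar p$ over a piece $B_k$) into a quantitative lower bound on the Hellinger distances $h(q^{(k)}_n(\cdot\mid\cdot),p_{b_0}(\Delta,X_{(n-1)\Delta},\cdot))$ that is effective along the ergodic trajectory --- using $|P^b_\Delta f(x)-P^{b_0}_\Delta f(x)|\le 2\|f\|_\infty\,h(p_b(\Delta,x,\cdot),p_{b_0}(\Delta,x,\cdot))$, a comparison between $\nu$ and $\mu_{b_0}$ (both of full support, $\pi_{b_0}>0$), and the geometric ergodicity guaranteed by the inward-drift property (b) of Definition \ref{driftclass}, which makes the chain return to compact sets with positive $\mu_{b_0}$-frequency. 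The requisite moment and transition-density estimates are the content of the auxiliary lemmas in Appendices \ref{appendix} and \ref{appendixB} (in particular Lemmas \ref{lemma5.1} and \ref{lemma6.1}), together with care over the measurability of $b\mapsto p_b(\Delta,x,y)$ (Remark \ref{remlikelihood2}) so that the predictive densities are well defined. Finally, combining the numerator bound with the denominator bound for some $\beta<c$ gives $\Pi(A_{f,\varepsilon}\mid X_0,\dots,X_{n\Delta})\le e^{-(c-\beta)n}\to0$ $P_{b_0}$-a.s., and summing over the finitely many subbasic sets yields posterior consistency as in Definition \ref{defconsistency}.
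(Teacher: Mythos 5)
Your overall skeleton (reduction to subbasic sets, Kullback--Leibler prior mass to control the denominator via Jensen/Fubini/ergodic theorem, and Walker's square-root/Hellinger-affinity supermartingale for the numerator) is the same as the paper's, which relies on Lemmas \ref{lemma5.1}, \ref{lemma5.2} and \ref{lemma5.3}. The genuine gap is precisely the step you flag as ``the main obstacle'': your covering of $A_{f,\varepsilon}$ by Arzel\`a--Ascoli balls of small oscillation only guarantees that every (data-dependent) predictive mixture $\bar p_i$ over a piece $B_k$ stays at $\|\cdot\|_{1,\nu}$-distance $\geq\varepsilon/2$ from $P^{b_0}_\Delta f$, and this cannot in general be converted into Hellinger separation that is effective along the trajectory. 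The set $A_i=\{x:|\int \bar p_i(x,y)f(y)\,\mathrm{d}y-P^{b_0}_\Delta f(x)|\geq \varepsilon/(4\nu(\mathbb{R}))\}$ is only $\nu$-large; since $\nu$ is an arbitrary finite Borel measure of full support, it may be singular with respect to Lebesgue measure (hence with respect to $\mu_{b_0}$), so $\mu_{b_0}(A_i)$ can be $0$ and no comparison ``$\nu$ versus $\mu_{b_0}$'' is available. Worse, $A_i$ varies with $i$ and is a function of $X_0,\dots,X_{(i-1)\Delta}$, including the very state $X_{(i-1)\Delta}$ at which separation would be needed, so neither ergodic visit frequencies to a fixed set nor a conditional-probability argument can ensure that $X_{(i-1)\Delta}\in A_i$ for a positive fraction of times. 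Consequently the geometric decay of $\ee_{b_0}[(R^{(k)}_n)^{1/2}]$, which is the engine of the whole numerator bound, is not established.

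The paper resolves exactly this point through Lemma \ref{lemma5.3} (built on the equicontinuity Lemma \ref{lemma6.1}): one fixes in advance a compact $F$ with $\nu(F^c)$ small and a finite family of intervals $I_1,\dots,I_N$ of positive Lebesgue (hence positive $\mu_{b_0}$) measure, and splits $B$ into sets $B_j^{\pm}$ on which $P^b_\Delta f(x)-P^{b_0}_\Delta f(x)$ exceeds $\varepsilon/(4\nu(F))$ in absolute value \emph{with a fixed sign, pointwise for all $x\in I_j$}, uniformly in $b\in B_j^{\pm}$. Sign-definiteness on a fixed, data-independent interval is what makes the separation stable under arbitrary posterior-weighted mixtures, and ergodicity then gives a positive limiting frequency of visits of $X_{(i-1)\Delta}$ to $I_j$, whence the exponential decay of $\sqrt{\int_{B_j^{\pm}}L_n(b)\Pi(\mathrm{d}b)}$ via the positive-supermartingale convergence argument. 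Your construction could be repaired in this direction --- choose centres $b_k\in A_{f,\varepsilon}$, use a (yet-to-be-proved) uniform continuity of $b\mapsto P^b_\Delta f$ on compacta together with Lemma \ref{lemma6.1} to transfer the centre's pointwise, sign-definite separation on a fixed small interval to all of $B_k$ --- but as written the proposal neither proves that continuity-in-$b$ lemma (it is not Lemma \ref{lemma6.1}, which is equicontinuity in $x$ uniformly in $b$) nor supplies the sign-definite, fixed-location separation that the supermartingale argument actually requires.
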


\begin{rem}
\label{rem_prior2}
The fact that the members $b$ of the parameter set $\mathcal{X}(K)$ must satisfy the linear growth assumption for a uniform constant $K,$ as well as the fact that $\mathcal{X}(K)$ must be a locally uniformly equicontinuous family of functions, is unfortunate, as this excludes many interesting and popular priors in non-parametric Bayesian statistics (for instance the Gaussian process priors; cf.\ \cite{panzar}), but cannot be avoided with the current method of proof (cf.\ the remarks on p.\ 51 and p.\ 60 in \cite{meulen}). In fact, already in the parametric setting stronger conditions are used to theoretically justify validity of Bayesian computational approaches, such as the ones in \cite{beskos} and \cite{eraker}. We also remark that in the parametric estimation case from discrete-time observations, some domination conditions on the drift coefficients are still imposed in the asymptotic studies in the frequentist literature, that do not appear to be easily dispensable, except perhaps in simple cases like that of the Langevin equation \eqref{ou}; see e.g.\ \cite{dacunha} and \cite{flor}. \qed
\end{rem}

\begin{rem}
\label{rem_prior1}
Condition \eqref{priorcondition} on the prior $\Pi$ is formulated in terms of the $L_{2}(\mu_{b_0})$-neighbourhoods, while the posterior consistency assertion returned by Theorem \ref{mainthm} is for the weak topology $\mathcal{T}.$ However, by Remark \ref{remtopology}, for a small $\Delta$ the `discrepancy' is not as dramatic as it may seem at the first sight. \qed
\end{rem}

\begin{rem}
\label{prior_exam1}
Since $b_0$ is unknown, the prior $\Pi$ must verify \eqref{priorcondition} at all parameter values $b\in\mathcal{X}.$ Such priors do exist: since conditions of Theorem \ref{mainthm} are implied by conditions in Theorem 3.5 in \cite{meulen} (an assumption on the drift coefficients $b$ ensuring ergodicity of $X$ is not made explicit in \cite{meulen}, see p.\ 47 there, but this does not cause any problems in our setting), two concrete examples of the prior $\Pi$ can be found in Section 4 in \cite{meulen}. \qed
\end{rem}

\begin{rem}
\label{priorrem}
The conditions of Theorem \ref{mainthm} cover the case of the Langevin equation \eqref{ou} with $\sigma=1$ known and $\beta$ an unknown parameter of interest. \qed
\end{rem}

\section{Posterior consistency: multidimensional case}
\label{main2}

In this section we turn our attention from the one-dimensional case to the general multidimensional case ($d\geq 1$). The developments in this section are parallel to those in Section \ref{main} and involve some repetitions, so we try to be relatively brief.

Our parameter set will be a subset of the class $\widetilde{\mathcal{X}}(K_1,K_2)$ of drift coefficients introduced below.

\begin{defin}
\label{drift_class_md}
The family $\widetilde{\mathcal{X}}(K_1,K_2)$ consists of Borel-measurable drift coefficients $b:\mathbb{R}^d\rightarrow\mathbb{R}^d$ possessing the following three properties:
\begin{enumerate}[(a)]
\item for any $b\in\widetilde{\mathcal{X}}(K_1,K_2),$ there exists a $\mathcal{C}^3$-function $V_b:\mathbb{R}^d\rightarrow\mathbb{R},$ such that
\begin{equation*}
\quad C_b=\int_{\mathbb{R}^d}e^{-2V_b(u)}du<\infty,
\end{equation*}
$|V_b(x)|$ grows not faster than a polynomial of $\|x\|$ at infinity and $b=-[\nabla V_b]^{tr},$ where $\nabla V_b$ is the gradient of $V_b$ and $tr$ denotes transposition;
\item for any $b\in\widetilde{\mathcal{X}}(K_1,K_2),$ there exist three constants $r_b>0,$ $M_b>0$ and $\alpha_b \geq 1,$ such that
\begin{equation*}
b(x) \cdot x \leq - r_b \|x\|^{\alpha_b}, \quad \forall \|x\| \geq M_b,
\end{equation*}
where by dot we denote the usual scalar product on $\mathbb{R}^d$ and  $\|x\|$ is the $L_2$-norm of a vector $x\in\mathbb{R}^d;$
\item there exist two constants $K_1>0$ and $K_2>0,$ such that for any $b\in\widetilde{\mathcal{X}}(K_1,K_2),$
\begin{equation*}
\|b(x)\|\leq K_1(1+\|x\|), \quad \left|\frac{\partial}{\partial x_j} b_i(x) \right | \leq K_2, \quad \forall x\in\mathbb{R}^d, \quad i,j=1,\ldots,d.
\end{equation*}
\end{enumerate}
\end{defin}

\begin{rem}
\label{rem_drift_md}
Assumptions made in Definition \ref{drift_class_md} are more than enough to guarantee existence of the unique (in the sense of the probability law) solution to \eqref{sde}. By Proposition 1 in \cite{schmisser}, cf.\ p.\ 27 in \cite{dalalyan}, these assumptions also imply existence of the unique ergodic distribution $\mu_b$ that has the density
\begin{equation*}
\pi_b(x)=\frac{1}{C_b} e^{-2V(x)}>0
\end{equation*}
with respect to the $d$-dimensional Lebesgue measure. In models in physics the function $V_b$ has the interpretation of the potential energy of the system. Furthermore, Proposition 1.2 in \cite{gobet01} implies existence of strictly positive transition densities $p_b(t,x,y)$ associated with \eqref{sde}. Finally, for any $b,\widetilde{b}\in\mathcal{X}(K_1,K_2)$ we also have that the Kullback-Leibler divergence $\operatorname{K}(\mu_{b},\mu_{\widetilde{b}})$ is finite, which we use in the proof of Lemma \ref{lemma5.1b} in Appendix \ref{appendixB}. \qed
\end{rem}

\begin{rem}
\label{rem_comparison}
Compared to the case $d=1$ in Section \ref{main}, assumptions made in Definition \ref{drift_class_md} on the class of drift coefficients, when specified to the case $d=1,$ are somewhat stronger. \qed
\end{rem}

\begin{rem}
\label{rem_examples_md}
Examples of multidimensional stochastic differential equations satisfying assumptions in Definition \ref{drift_class_md} are given in Section 5.2 in \cite{schmisser}. \qed
\end{rem}

Define the transition operators associated with \eqref{sde} and acting on the class of bounded measurable functions $f:\mathbb{R}^d\rightarrow\mathbb{R}$ by
\begin{equation*}
P_t^bf(x)=\int_{\mathbb{R}^d}p_b(t,x,y)f(y)\mathrm{d}y,
\end{equation*}
and for every fixed $f\in\mathcal{C}_{bdd}(\mathbb{R}^d), \varepsilon>0, b\in\widetilde{\mathcal{X}}(K_1,K_2)$ let 
\begin{equation*}
U_{f,\varepsilon}^b=\{\widetilde{b}\in{\widetilde{\mathcal{X}}}(K_1,K_2):\| P_{\Delta}^{\widetilde{b}}f - P_{\Delta}^{{b}}f \|_{1,\nu}<\varepsilon\},
\end{equation*}
where $\nu$ is a fixed finite Borel measure on $\mathbb{R}^d$ that assigns strictly positive mass to every non-empty open subset of $\mathbb{R}^d.$ Define the topology $\widetilde{\mathcal{T}}$ on $\widetilde{\mathcal{X}}(K_1,K_2)$ through a subbase
\begin{equation*}
\widetilde{\mathcal{U}}=\{ U_{f,\varepsilon}^b: f\in\mathcal{C}_{bdd}(\mathbb{R}^d), \varepsilon>0, b\in\widetilde{\mathcal{X}}(K_1,K_2) \}.
\end{equation*}
 In complete analogy to Lemma \ref{lemma3.2}, we have the following result.

\begin{lemma}
\label{lemma3.2b}
The topological space $(\widetilde{\mathcal{X}}(K_1,K_2),\widetilde{\mathcal{T}})$ with $\widetilde{\mathcal{X}}(K_1,K_2)$ as in Assumption \ref{drift_class_md} is a Hausdorff space.
\end{lemma}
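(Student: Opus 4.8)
The plan is to mimic the proof of Lemma \ref{lemma3.2} in the one-dimensional case, so the whole argument reduces to establishing the separation (identifiability) property for the subbase $\widetilde{\mathcal{U}}$. Concretely, given two distinct drift coefficients $b_1,b_2\in\widetilde{\mathcal{X}}(K_1,K_2)$ — distinct meaning not equal Lebesgue-a.e.\ on $\mathbb{R}^d$ — I would produce a bounded continuous $f:\mathbb{R}^d\to\mathbb{R}$ and an $\varepsilon>0$ such that the $U^{b_1}_{f,\varepsilon}$ and $U^{b_2}_{f,2\varepsilon}$-type neighbourhoods (more precisely, two basic open sets obtained as finite intersections of subbase members, one containing $b_1$ and one containing $b_2$) are disjoint. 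Once that is in place, the Hausdorff property is immediate from the definition of the topology generated by the subbase.

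First I would argue that it suffices to show $P^{b_1}_\Delta f \neq P^{b_2}_\Delta f$ in $L_1(\nu)$ for some $f\in\mathcal{C}_{bdd}(\mathbb{R}^d)$: if this holds with $\|P^{b_1}_\Delta f - P^{b_2}_\Delta f\|_{1,\nu}=3\eta>0$, then $U^{b_1}_{f,\eta}$ and $U^{b_2}_{f,\eta}$ are disjoint by the triangle inequality, and each is open and contains the respective point. So the core step is: if $P^{b_1}_\Delta = P^{b_2}_\Delta$ as operators on $\mathcal{C}_{bdd}(\mathbb{R}^d)$, then $b_1=b_2$ a.e. To see this, note that by Remark \ref{rem_drift_md} both equations admit strictly positive smooth transition densities $p_{b_i}(t,x,y)$ (via Proposition 1.2 in \cite{gobet01}), so equality of the operators at time $\Delta$ forces $p_{b_1}(\Delta,x,\cdot)=p_{b_2}(\Delta,x,\cdot)$ for $\nu$-a.e.\ $x$, hence (by continuity in $x$ and the fact that $\nu$ charges every nonempty open set) for every $x$. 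Then the Chapman–Kolmogorov semigroup relation propagates this equality to all times $t\in(0,\Delta]$, and letting $t\downarrow 0$ together with the standard short-time expansion $P^{b}_t f(x) = f(x) + t\,(\mathcal{L}_b f)(x) + o(t)$ for $f\in\mathcal{C}^2_c$, where $\mathcal{L}_b = \tfrac12\Delta + b\cdot\nabla$ is the generator, yields $\mathcal{L}_{b_1}f=\mathcal{L}_{b_2}f$ for all such $f$, i.e.\ $(b_1-b_2)\cdot\nabla f\equiv 0$ for all $f\in\mathcal{C}^2_c(\mathbb{R}^d)$. Choosing coordinate-localised test functions gives $b_1=b_2$ everywhere, contradicting distinctness. (Alternatively, one can bypass the semigroup argument and use the explicit gradient form $b_i=-[\nabla V_{b_i}]^{tr}$ from Definition \ref{drift_class_md}(a) together with $\pi_{b_i}\propto e^{-2V_{b_i}}$: if the transition densities agree at time $\Delta$ then the invariant densities agree, hence $V_{b_1}-V_{b_2}$ is constant, hence $b_1=b_2$; this is cleaner but uses the potential structure.)

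The main obstacle I anticipate is the justification of the short-time generator expansion with the required uniformity so that one can legitimately pass from equality of $P^{b_i}_t$ to equality of generators; this needs the regularity afforded by Definition \ref{drift_class_md}(c) (Lipschitz-type bounds on $b$ and linear growth) to control the remainder $o(t)$ on the support of a fixed $f\in\mathcal{C}^2_c$, and it is exactly here that the multidimensional heat-kernel estimates of \cite{gobet01} or a direct Itô–Dynkin computation must be invoked carefully. If this turns out to be delicate, the potential-theoretic route sketched in the parenthetical remark is a safe fallback, since it only requires uniqueness of the invariant density and the representation in Definition \ref{drift_class_md}(a), both of which are already recorded in Remark \ref{rem_drift_md}. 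All remaining verifications — that $\widetilde{\mathcal{U}}$ is a genuine subbase (covered by Theorem 2.2.6 in \cite{dudley}, exactly as in Remark \ref{remtopology0}), and that disjoint basic open sets separate the points — are routine and identical to the $d=1$ case.
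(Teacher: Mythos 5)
Your overall skeleton — reduce the Hausdorff property to the injectivity statement ``$b_1\neq b_2$ implies $P_{\Delta}^{b_1}\neq P_{\Delta}^{b_2}$'' and then separate $b_1,b_2$ by two disjoint subbasic sets via the triangle inequality — is exactly how the paper proceeds: it defers the topological bookkeeping to the proof of Lemma 3.2 in \cite{meulen} and isolates the injectivity statement as Lemma \ref{lemma3.1b}. However, your \emph{primary} route to that injectivity statement has a genuine flaw. From $P_{\Delta}^{b_1}=P_{\Delta}^{b_2}$ the semigroup (Chapman--Kolmogorov) relation only yields $P_{m\Delta}^{b_1}=P_{m\Delta}^{b_2}$ for positive integers $m$; it does \emph{not} ``propagate the equality to all times $t\in(0,\Delta]$'', because knowledge of a Markov semigroup at one fixed time does not determine it at smaller times by composition. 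Hence the limit $t\downarrow 0$ and the short-time expansion $P_t^bf=f+t\,\mathcal{L}_bf+o(t)$ are never legitimately reached, and the identification of the generators collapses; with $\Delta$ fixed, as it is throughout the paper, this route would require a completely different analytic input.

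Your parenthetical fallback, by contrast, is precisely the paper's proof of Lemma \ref{lemma3.1b} and is what you should promote to the main argument: if $P_{\Delta}^{b_1}=P_{\Delta}^{b_2}$, then $(P_{\Delta}^{b_1})^mf=(P_{\Delta}^{b_2})^mf$ for all $m$, and letting $m\rightarrow\infty$ ergodicity gives $\int_{\mathbb{R}^d} f\pi_{b_1}=\int_{\mathbb{R}^d} f\pi_{b_2}$ for all bounded $f$, so $\pi_{b_1}=\pi_{b_2}$ a.e.\ and, by continuity, everywhere; since $\pi_{b_i}\propto e^{-2V_{b_i}}$, the difference $V_{b_1}-V_{b_2}$ is constant, and the gradient form $b_i=-[\nabla V_{b_i}]^{tr}$ from Definition \ref{drift_class_md}(a) forces $b_1=b_2$, a contradiction. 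The paper runs this in the contrapositive direction (if $b\neq\widetilde b$ then $\pi_b\neq\pi_{\widetilde b}$ on a set of positive Lebesgue measure, whereas $P_{\Delta}^{b}=P_{\Delta}^{\widetilde b}$ would force $\pi_b=\pi_{\widetilde b}$), but the content is identical, and it needs nothing from \cite{gobet01} beyond positivity of the transition densities — no generator expansion at all. The remaining steps (subbase via Theorem 2.2.6 in \cite{dudley}; disjointness of $U^{b_1}_{f,\eta}$ and $U^{b_2}_{f,\eta}$ once $\|P_{\Delta}^{b_1}f-P_{\Delta}^{b_2}f\|_{1,\nu}=3\eta>0$) are indeed routine as in the $d=1$ case, though to pass from mere inequality of the operators to a strictly positive $L_1(\nu)$-distance for some $f\in\mathcal{C}_{bdd}(\mathbb{R}^d)$ you should record that $x\mapsto P_{\Delta}^bf(x)$ is continuous (e.g.\ by Lemma \ref{lemma6.1b}) and that $\nu$ charges every nonempty open set.
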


Let ${\mathcal{X}}(K_1,K_2)\subseteq\widetilde{\mathcal{X}}(K_1,K_2),$ with the interpretation that ${\mathcal{X}}(K_1,K_2)$ is our parameter set, and let ${\mathcal{T}}=\{ A \cap \mathcal{X}(K_1,K_2):A\in\widetilde{\mathcal{T}} \}$ be the corresponding relative topology on $\mathcal{X}(K_1,K_2).$  If for any neighbourhood $U_{b_0}\in\mathcal{T}$ of $b_0\in \mathcal{X}(K_1,K_2)$ we have
\begin{equation*}
\Pi(U_{b_0}^c|X_0,\ldots,X_{n\Delta}) \rightarrow 0, \quad \text{$P_{b_0}$-a.s.}
\end{equation*}
as $n\rightarrow\infty,$ we will say that posterior consistency holds at $b_0.$

We summarise our assumptions.

\begin{assump}
\label{standing_b}
Assume that
\begin{enumerate}[(a)]
\item a unique in law non-exploding weak solution to \eqref{sde} corresponding to each $b\in\mathcal{X}(K_1,K_2)$ is initialised at the ergodic distribution $\mu_{b},$
\item $b_0\in\mathcal{X}(K_1,K_2)$ denotes the true drift coefficient,
\item a discrete-time sample $X_0,\ldots,X_{n\Delta}$ from the solution to \eqref{sde} corresponding to $b_0$ is available (we assume that we are in the canonical setup as in Section \ref{intro}), and finally, $\Delta$ is fixed and independent of $n.$
\end{enumerate}
\end{assump}

Under Assumption \ref{standing_b}, the following multidimensional analogue of Theorem \ref{mainthm} holds.

\begin{thm}
\label{mainthm_b}
Let Assumption \ref{standing_b} hold and suppose that the prior $\Pi$ on $\mathcal{X}(K_1,K_2)$ is such that
\begin{equation}
\label{priorcondition_b}
\Pi\left(b\in{\mathcal{X}}(K_1,K_2):\left\{\sum_{i=1}^d \|b_i-b_{0,i}\|_{2,\mu_{b_0}}^2\right\}^{1/2}<\varepsilon\right)>0, \quad \forall \varepsilon>0.
\end{equation}
Then posterior consistency holds.
\end{thm}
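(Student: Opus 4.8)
The plan is to follow the martingale approach of \cite{walker0}, \cite{walker}, adapted to the Markov-chain setting via \cite{tang}, \cite{ghosal06} and especially \cite{meulen}, in exact parallel to the proof of Theorem \ref{mainthm} in the one-dimensional case. Write $L_n(b)=\prod_{i=1}^n p_b(\Delta,X_{(i-1)\Delta},X_{i\Delta})$ for the (normalised, dropping the common $\pi_b(X_0)$ factor is not possible here since $\pi_{b_0}$ is unknown, so one keeps it) likelihood, and for a neighbourhood base element $U_{b_0}=U_{f,\varepsilon}^{b_0}\cap\mathcal{X}(K_1,K_2)$ decompose $\Pi(U_{b_0}^c\mid X_0,\dots,X_{n\Delta})=N_n/D_n$ with $N_n=\int_{U_{b_0}^c}\frac{\pi_b(X_0)}{\pi_{b_0}(X_0)}\prod_{i=1}^n\frac{p_b(\Delta,X_{(i-1)\Delta},X_{i\Delta})}{p_{b_0}(\Delta,X_{(i-1)\Delta},X_{i\Delta})}\,\Pi(\mathrm{d}b)$ and $D_n$ the same integral over all of $\mathcal{X}(K_1,K_2)$.

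First I would establish the lower bound on the denominator: there exists $c>0$ such that $e^{cn}D_n\to\infty$ $P_{b_0}$-a.s. This is the Kullback--Leibler / prior-support step. Using ergodicity of $X$ under $b_0$ (guaranteed by Definition \ref{drift_class_md} via Remark \ref{rem_drift_md}) together with the strong law for the stationary ergodic sequence $\{X_{i\Delta}\}$, one shows $\frac1n\log L_n(b)/L_n(b_0)\to -\mathrm{K}_\Delta(b_0,b)$ where $\mathrm{K}_\Delta$ is the Kullback--Leibler divergence between the $\Delta$-step transition laws started from $\mu_{b_0}$; the multidimensional Girsanov computation (legitimate by the linear-growth and bounded-derivative conditions in Definition \ref{drift_class_md}(c)) gives $\mathrm{K}_\Delta(b_0,b)\le \tfrac{\Delta}{2}\sum_{i=1}^d\|b_i-b_{0,i}\|_{2,\mu_{b_0}}^2$ up to constants, exactly as in the scalar case but with the coordinatewise sum. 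Hence the prior-positivity hypothesis \eqref{priorcondition_b} transfers to positivity of the prior mass on $\{b:\mathrm{K}_\Delta(b_0,b)<\delta\}$ for every $\delta>0$, and a by-now-standard argument (Lemma analogous to Lemma \ref{lemma5.1b} cited in Remark \ref{rem_drift_md}, plus the finiteness of the relevant KL divergences there) yields the required exponential lower bound on $D_n$.

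Next I would bound the numerator. The key device is a uniformly consistent sequence of tests separating $b_0$ from $U_{b_0}^c$: because the topology is generated by the maps $b\mapsto P_\Delta^b f$ evaluated in $\|\cdot\|_{1,\nu}$, and $U_{b_0}=\{b:\|P_\Delta^b f-P_\Delta^{b_0}f\|_{1,\nu}<\varepsilon\}$, one builds tests from empirical averages $\frac1n\sum_{i=1}^n g(X_{(i-1)\Delta},X_{i\Delta})$ for suitable bounded $g$ whose $P_{b_0}$-expectation is $\nu$-integrated-against $P_\Delta^{b_0}f$, using a martingale/Hoeffding-type exponential inequality for the stationary ergodic chain to get exponentially small type-I and type-II errors uniformly over $U_{b_0}^c$. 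One then splits $U_{b_0}^c$ into the ``testable'' part, on which $N_n$ is exponentially small by the tests combined with the martingale property $\ee_{b_0}[L_n(b)/L_n(b_0)\mid\mathcal F_{(n-1)\Delta}^X]=L_{n-1}(b)/L_{n-1}(b_0)$ and Fubini, and — this is the part where the local uniform equicontinuity / compactness structure of $\mathcal X(K_1,K_2)$ enters in the scalar proof — a sieve/entropy argument covering the non-testable remainder by finitely many such neighbourhoods; the linear-growth constant $K_1$ and the uniform derivative bound $K_2$ play the role that $K$ and local equicontinuity played for $d=1$, controlling the modulus of continuity of $x\mapsto P_\Delta^b f(x)$ uniformly in $b$ (via the Girsanov representation and moment bounds on $X$ from Definition \ref{drift_class_md}(b), which forces exponential-type tails of $\mu_b$). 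Combining $N_n e^{cn/2}\to 0$ with $D_n e^{cn}\to\infty$ for $c$ chosen smaller than all the exponential test rates gives $N_n/D_n\to 0$ $P_{b_0}$-a.s., and since every $\mathcal T$-neighbourhood of $b_0$ contains a finite intersection of sets $U_{f,\varepsilon}^{b_0}$, a finite-intersection argument upgrades consistency from subbasic neighbourhoods to arbitrary ones, completing the proof.

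The main obstacle I anticipate is the uniform control of the transition operators across the unbounded parameter class: in the multidimensional case one no longer has the scalar scale-and-speed-measure machinery, so the existence and regularity of $p_b$, the exponential tail bounds on $\mu_b$, and the uniform-in-$b$ equicontinuity of $P_\Delta^b f$ must instead be extracted from the cited results (Gobet, Schmisser, Dalalyan) together with Girsanov and Gaussian-type bounds, and one must check that the constants produced there depend on $b$ only through $K_1,K_2$ (and not through the $b$-dependent constants $r_b,M_b,\alpha_b$) so that the sieve covering argument goes through uniformly. Everything else is a faithful coordinatewise transcription of the $d=1$ argument.
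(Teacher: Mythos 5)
Your treatment of the denominator is essentially the paper's: the Girsanov computation bounding the transition-law Kullback--Leibler divergence by $\tfrac{\Delta}{2}\sum_i\|b_i-b_{0,i}\|_{2,\mu_{b_0}}^2$ (plus the finite KL divergence of the invariant laws) is Lemma \ref{lemma5.1b}, and the resulting exponential lower bound is Lemma \ref{lemma5.2b}; the reduction to subbasic sets $U^{b_0}_{f,\varepsilon}$ is also the paper's first step. The gap is in your numerator step. You propose uniformly consistent tests built from empirical averages $\tfrac1n\sum g(X_{(i-1)\Delta},X_{i\Delta})$ with exponentially small type-I \emph{and} type-II errors, plus a sieve/entropy argument for a non-testable remainder. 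Three things block this under the stated hypotheses: (i) exponentially small type-II errors uniformly over $U_{b_0}^c$ require exponential concentration of these averages under every alternative $P_b$, and the relevant mixing/ergodicity constants depend on the non-uniform, $b$-specific quantities $r_b,M_b,\alpha_b$ from Definition \ref{drift_class_md}(b) (not only on $K_1,K_2$), precisely the non-uniformity you flag but do not resolve; (ii) a Hoeffding-type exponential inequality is not available from mere ergodicity on an unbounded state space; (iii) the sieve step needs a prior tail/mass condition on the complement of the sieve, and Theorem \ref{mainthm_b} assumes only the positivity condition \eqref{priorcondition_b}. Indeed, Section \ref{discussion} of the paper explains that the test-based route (Ghosal--van der Vaart via Birg\'e's tests for Markov chains) does not appear to apply to these diffusion models, which is exactly why it is avoided.

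What the paper does instead works entirely under $P_{b_0}$ and needs no tests, no entropy bound, and no control under alternatives. Lemma \ref{lemma6.1b} shows, via the Girsanov representation $P_\Delta^b f(x)=\ex[f(x+W_\Delta)L_x]$ with moment bounds depending only on $K_1,K_2$ and $d$, that $\{P_\Delta^b f: b\in\mathcal{X}(K_1,K_2)\}$ is locally uniformly equicontinuous; Lemma \ref{lemma5.3b} then covers the subbasic complement $B$ by \emph{finitely many} sets $B_j^{\pm}$ on which $P_\Delta^b f(x)-P_\Delta^{b_0}f(x)$ is uniformly bounded away from $0$ for all $x$ in a small cube $I_j$. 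For each such $B_j^{\pm}$ one shows that $\sqrt{\int_{B_j^{\pm}}L_n(b)\,\Pi(\mathrm{d}b)}$ decays exponentially $P_{b_0}$-a.s.\ by the Walker-type positive supermartingale argument (the uniform separation on $I_j$ gives a Hellinger-type gap between the predictive mixture transition density and $p_{b_0}$ whenever the chain visits $I_j$, and ergodicity under $b_0$ alone ensures this happens a positive fraction of the time); Lemma \ref{lemma5.2b} then finishes. If you replace your testing-plus-sieve step by this covering-plus-supermartingale step, the rest of your outline matches the paper's proof.
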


Condition \eqref{priorcondition_b} on the prior is of the same type as condition \eqref{priorcondition} in Theorem \ref{mainthm}. We provide an example of a prior $\Pi$ satisfying this condition. The construction of $\Pi$ is similar to that in Example 4.1 in \cite{meulen}. Both examples are related to discrete net priors in non-parametric Bayesian inference problems studied in \cite{gho}. The construction is admittedly artificial, but its sole goal is to show existence of a prior satisfying \eqref{priorcondition_b}.

\begin{exam}
\label{exam_prior2}
Let $\mathfrak{F}$ be a collection of $\mathcal{C}^3$-functions $f:\mathbb{R}\rightarrow\mathbb{R},$ such that
\begin{enumerate}[(a)]
\item for some polynomial function $G:\mathbb{R}\rightarrow\mathbb{R}$ and $\forall f\in\mathfrak{F}$ we have
\begin{equation*}
|f(x)|\leq G(x), \quad \forall x\in\mathbb{R}_{+};
\end{equation*}
\item for some constant $K_1>0$ and $\forall f\in\mathfrak{F}$ we have
\begin{equation*}
|f^{\prime}(x)|\leq \frac{K_1}{2}, \quad \forall x\in\mathbb{R}_{+};
\end{equation*}
\item $\forall f\in\mathfrak{F}$ we have
\begin{equation*}
\int_{\mathbb{R}^d} e^{-2f(\|x\|^2)}dx<\infty;
\end{equation*}
\item $\forall f\in\mathfrak{F}$ there exist two constants $M_f>0$ and $r_f>0,$ such that $f^{\prime}(x)\geq r_f,\forall x\geq M_f;$
\item for some constant $K_2>0$ and $\forall f\in\mathfrak{F},$
\begin{equation*}
\sup_{x\in\mathbb{R}_{+}}\left\{ 4 x |f^{\prime\prime}(x)| + 2 {|f^{\prime}(x)|} \right\}\leq K_2.
\end{equation*}
\end{enumerate}
For all $x\in\mathbb{R}^d$ set $V_f(x)=f(\|x\|^2)$ and $b_f(x)=-[\nabla V_f(x)]^{tr}.$ Let $\mathcal{X}(K_1,K_2)$ be a subset of a collection of all functions $b_f=(b_{f,1},\ldots,b_{f,d})$ obtained in this way (the fact that this is a valid definition, in the sense that the requirements from Definition \ref{drift_class_md} are satisfied, follows by easy, but somewhat tedious computations; note that by taking $f_{\beta}=\beta x/2$ and assuming $d=1$ and $\beta\in (0,K_1],$ we can cover the case of the Langevin equation \eqref{ou}). We get from (b) that for every fixed $i=1,\dots,d,$ the functions $b_{f,i}$ are locally bounded by constants uniform in $f\in\mathfrak{F}$. Furthermore, they are Lipschitz with uniform constants in $f\in\mathfrak{F}$ as well: by the mean value theorem,
\begin{align*}
|b_{f,i}(x)-b_{f,i}(y)|&\leq \| \nabla b_{f,i}(\lambda x + (1-\lambda)y) \| \|x-y\|\\
&\leq \sqrt{d}K_2  \|x-y\|.
\end{align*}
Hence for each $m\in\mathbb{N}$ and $i=1,\ldots,d,$ by the Arzel\`a-Ascoli theorem, see Theorem 2.4.7 in \cite{dudley}, the collection $\mathfrak{B}_{m,i}$ of restrictions $b_{f,i}|_{[-m,m]}$ of the functions $b_{f,i},f\in\mathfrak{F}$ to the intervals $[-m,m]$ is totally bounded for the supremum metric $\|\cdot\|_{\infty}$ (for the required definitions see p.\ 45 and p.\ 52 in \cite{dudley}). Then so is the product $\bigotimes_{i=1}^d \mathfrak{B}_{m,i}$ for the product metric
\begin{equation*}
\|b_f\|_{d,m,\infty}=\max_{i=1,\ldots,d} \| b_{f,i}|_{[-m,m]} \|_{\infty},
\end{equation*}
as well as its subset consisting of elements
\begin{equation*}
b_f|_{[-m,m]^d}=(b_{f,1}|_{[-m,m]},\ldots,b_{f,d}|_{[-m,m]}), \quad f\in\mathfrak{F}.
\end{equation*}
Take a sequence $\epsilon_l\downarrow 0.$ For any $l\in\mathbb{N},$ there exists a finite subset $\mathfrak{F}_{m,\epsilon_l}=\{ f_{n}^{m,\epsilon_l},n=1,\ldots,n_{m,l} \}$ such that for any $f\in\mathfrak{F},$ $\| b_{f} - b_{ f_{n}^{m,\epsilon_l} } \|_{d,m,\infty} < \epsilon_l$ for some $n=1,\ldots,n_{m,l}.$ Let $\widetilde{Q}_1$ and $\widetilde{Q}_2$ be two measures on ${\mathbb{N}},$ such that $q_{j,i}=\widetilde{Q}_i(j)>0,i=1,2,j\in{\mathbb{N}}.$ The prior $\Pi$ on $\mathcal{X}(K_1,K_2)$ is defined by
\begin{equation*}
\Pi=\sum_{m=1}^{\infty}\sum_{l=1}^{\infty}\sum_{n=1}^{n_{m,l}}\frac{q_{m,1}q_{l,2}}{n_{m,l}}\delta_{ b_{ f_{n}^{m,\epsilon_l} } },
\end{equation*}
where $\delta_{ b_{ f_{n}^{m,\epsilon_l} } }$ is the Dirac measure at $b_{ f_{n}^{m,\epsilon_l} }.$
The fact that $\Pi$ satisfies requirement \eqref{priorcondition} of Theorem \ref{mainthm} is the content of Lemma \ref{lemmapriorB} in Section \ref{proofs}. Since $\Pi$ assigns all its mass to a countable subset of $\mathcal{X}(K_1,K_2),$ measurability issues should not concern us  when integrating with respect to $\Pi.$
\qed
\end{exam}

\section{Discussion}
\label{discussion}

In this work we were able to demonstrate that posterior consistency for non-parametric Bayesian estimation of the drift coefficient of a stochstic differential equation holds not only for the class of uniformly bounded drift coefficients and in the one-dimensional setting, as shown previously in \cite{meulen}, but also in the multidimensional setting for the class of drift coefficients satisfying a linear growth assumption. This considerably enlarges the scope of the main result in \cite{meulen}. Interestingly, although derivation of the posterior consistency result in the one-dimensional case is quite involved in \cite{meulen} (cf.\ a remark on p.\ 60 in \cite{meulen}) and replacement of the uniform boundedness condition on the drift coefficient with the linear growth condition requires some technical prowess in the proofs, see in particular the proof of Lemma \ref{lemma6.1} in Appendix \ref{appendix}, generalisation to the multidimensional setting does not involve technicalities far different from those in the one-dimensional setting, provided one suitably restricts the non-parametric class of drift coefficients. In fact, conditions we impose in the multidimensional setting are analogous to those used in the frequentist literature, see \cite{dalalyan} and \cite{schmisser}, which is a comforting fact. On the other hand, posterior consistency results both in \cite{meulen} and in our work are established for a weak topology on the class of drift coefficients. This is a consequence of the fact that we rely on techniques from \cite{walker} in our proofs, which are better suited for proving posterior consistency in weak topologies. Consistency in stronger topologies could have been established and contraction rates of the posterior could have been derived from general results for posterior consistency in Markov chain models had we known existence of certain tests satisfying conditions as in formula (2.2) in \cite{ghosal2}; see Theorem 5 there. Existence of such tests for Markov chain models has been demonstrated in Theorem 3 in \cite{birge}, but unfortunately, the conditions involved in this theorem, cf.\ also formula (4.1) in \cite{ghosal2}, do not appear to hold in general for the stochastic differential equation models such as \cite{meulen} and we are considering. Hence establishing posterior consistency in a stronger topology and derivation of the posterior contraction rate for non-parametric Bayesian drift estimation is an interesting and difficult open problem. A recent paper \cite{pokern}  addresses the latter question for a one-dimensional stochastic differential equation with a periodic drift coefficient. However, this is done under an assumption that an entire sample path $\{X_t:t\in[0,T]\}$ is observed over the time interval $[0,T]$ with $T\rightarrow\infty.$ Moreover, periodic drift coefficients are completely different from the drift coefficients considered in Section \ref{main} of the present work and making use of the techniques from \cite{pokern} is impossible in our setting. Neither are the techniques in \cite{meulen0} and \cite{panzar} of any significant help (these papers deal with continuously observed scalar diffusion processes). It should also be noted that in the frequentist setting too (with $\Delta$ fixed) already in the one-dimensional setting study of convergence rates of non-parametric estimators of the drift and dispersion coefficients is a highly non-trivial task, see e.g.\ \cite{reiss}, where various simplifying assumptions have been made, such as the requirement that the diffusion process under consideration has a compact state space, say $[0,1],$ and is reflecting at the boundary points. Nevertheless, some progress in establishing posterior consistency in a stronger topology than in the present work might be possible in the setting where $\Delta=\Delta_n\rightarrow 0$ in such a way that $n\Delta_n\rightarrow\infty$ (the so-called high-frequency data setting).

Finally, we remark that issues associated with practical implementation of the non-parametric Bayesian approach to estimation of a drift coefficient are outside the scope of the present work. Although much remains to be done in this direction, preliminary studies, such as the ones in \cite{papa} and \cite{schauer}, see also the overview paper \cite{zanten12}, indicate that a non-parametric Bayesian approach in this context is both feasible and leads to reasonable results.

\section{Proofs}
\label{proofs}

\begin{proof}[Proof of Proposition \ref{classchi}]
As already mentioned in Remark \ref{remdriftclass2}, property (a) in Definition \ref{driftclass} suffices to guarantee existence of a unique non-exploding weak solution to equation \eqref{sde}. This proves part (1) of the proposition.

We next prove part (2). Although the result is well-known, a detailed proof does not seem to be available in the literature. We provide it for the reader's convenience. Introduce the scale function
\begin{equation*}
s_b(y)=\int_0^y \exp\left(-2\int_0^z b(x)\mathrm{d}x\right)\mathrm{d}z.
\end{equation*}
To show existence of an ergodic distribution, it is enough to show its existence for a process $\widetilde{X}_t=s_b(X_t),$ cf.\ p.\ 48 in \cite{skorokhod}. This process satisfies the stochastic differential equation
\begin{equation*}
d\widetilde{X}_t=\widetilde{\sigma}(\widetilde{X}_t)\mathrm{d}W_t,
\end{equation*}
where
\begin{equation*}
\widetilde{\sigma}(y)=s_b^{\prime}(s_b^{-1}(y)),
\end{equation*}
see Lemma 9 on p.\ 47 in \cite{skorokhod}. Here $s_b^{-1}$ denotes an inverse of $s_b.$ If we can show that
\begin{equation}
\label{ergcond}
s_b(-\infty)=-\infty, \quad s_b(\infty)=\infty, \quad \int_{\mathbb{R}}\frac{1}{\widetilde{\sigma}^2(y)}\mathrm{d}y <\infty,
\end{equation}
then Theorem 16 on p.\ 51 in \cite{skorokhod} will imply existence of a unique ergodic distribution for $\widetilde{X},$ and hence for $X$ too. However, under our assumptions checking these conditions is easy. Assuming for instance that the first two conditions in \eqref{ergcond} have been verified (the arguments used in their verification are similar to those used in verification of the third one), we will check the last one. By a change of the integration variable $x=s_b^{-1}(y),$ we have
\begin{equation}
\label{nomeri1}
\begin{split}
\int_{\mathbb{R}}\frac{1}{\widetilde{\sigma}^2(y)}\mathrm{d}y&=\int_{\mathbb{R}}\frac{1}{s_b^{\prime}(x)}\mathrm{d}x\\
&=\int_{\mathbb{R}} \exp\left(2\int_0^x b(y)\mathrm{d}y\right)\mathrm{d}x.
\end{split}
\end{equation}
Finiteness of the latter integral can be seen as follows: let $x\geq M_b>0$ and note that
\begin{equation}
\label{expfast}
\begin{split}
\exp\left(2\int_0^x b(y) \mathrm{d}y\right)&=\exp\left(2\int_0^{M_b} b(y) \mathrm{d}y\right)
 \exp\left(2\int_{M_b}^x b(y) \mathrm{d}y\right)\\
& \leq \exp\left(2K\int_0^{M_b} (1+y) \mathrm{d}y\right)
 \exp\left(-2 r_b\int_{M_b}^x \mathrm{d}y\right)\\
&=c_{b,K} \exp(-r_b|x|).
\end{split}
\end{equation}
The argument for negative $x$ is similar and yields a similar inequality. The integral in \eqref{nomeri1} is then finite thanks to the exponential decay property as in \eqref{expfast}, and existence of a unique ergodic distribution follows. By the same Theorem 16 on p.\ 51 in \cite{skorokhod}, the density of the ergodic distribution of $\widetilde{X}$ is given by
\begin{equation*}
\widetilde{\pi}(x)=\frac{(\widetilde{\sigma}(x))^{-2}}{\int_{\mathbb{R}}(\widetilde{\sigma}(x))^{-2}\mathrm{d}x}.
\end{equation*}
By the fact that $s_b$ is strictly increasing and hence $P( X_t \leq x)=P(s_b(X_t) \leq s_b(x)),$ it then follows by differentiation that the density of the ergodic distribution of $X$ is given by
\begin{equation*}
\pi_b(x)=\frac{1}{m_b(\mathbb{R})} \exp\left(2\int_0^x b(y)\mathrm{d}y \right),
\end{equation*}
where
\begin{equation*}
m_b(\mathrm{d}x)=\exp\left(2\int_0^x b(y)\mathrm{d}y\right)\mathrm{d}x
\end{equation*}
is the speed measure of $X.$ Furthermore, by the linear growth condition, $0<\pi_b(x)<\infty,\forall x\in\mathbb{R}.$ This proves part (2).

Finally, for the proof of part (3) we argue as follows: the first two equalities in \eqref{ergcond} and Proposition 5.22 (a) on p.\ 345 in \cite{karatzas} yield that the process $X$ is recurrent. Hence the solution to \eqref{sde} generates a regular diffusion (see Definition 45.2 on p.\ 272 in \cite{williams}). By Theorem 50.11 on pp.\ 294--295 in \cite{williams}, the transition probabilities of $X$ admit continuous, strictly positive and finite densities with respect to the speed measure $m_b(\mathrm{d}y)$
of $X.$ Since from part (2) we have in turn that
\begin{equation*}
0<\exp\left(2\int_0^y b(z)\mathrm{d}z\right)<\infty,\quad \forall z\in\mathbb{R},
\end{equation*}
it follows that transition probabilities of $X$ admit continuous, strictly positive and finite densities $p_b(t,x,y)$ with respect to the Lebesgue measure. This completes the proof of the proposition.
\end{proof}

\begin{proof}[Proof of Lemma \ref{lemma3.2}]
The lemma can be proved by arguments similar to those in the proof of Lemma 3.2 in \cite{meulen}; cf.\ also the proof of Lemma \ref{lemma3.2b} in Appendix \ref{appendixB}. The following result, which is an analogue of Lemma 3.1 in \cite{meulen}, is required in the proof (the arguments from the proof of the latter remain applicable): let $b\in\widetilde{\mathcal{X}}(K).$ Fix $t>0.$ If $b\neq\widetilde{b},$ then $P_t^{b} \neq P_t^{\widetilde{b}}.$
\end{proof}

\begin{proof}[Proof of Theorem \ref{mainthm}]
The proof follows the same main steps as the proof of Theorem 3.5 in \cite{meulen}, which in turn uses some ideas from \cite{tang} and \cite{walker}. In particular, our proof employs Lemmas \ref{lemma5.1}, \ref{lemma5.2} and \ref{lemma5.3} from Appendix \ref{appendix}, that correspond to Lemmas 5.1, 5.2 and 5.3 in \cite{meulen}. Fix $\varepsilon>0,$ take a fixed $f\in\mathcal{C}_{bdd}(\mathbb{R})$ and write
\begin{equation}
\label{setB}
B=\{ b\in\mathcal{X}(K):\| P_{\Delta}^b f - P_{\Delta}^{b_0} f \|_{1,\nu} > \varepsilon \}.
\end{equation}
Without loss of generality we may assume that $\|f\|_{\infty}\leq 1$ and $\varepsilon\leq 2\nu(\mathbb{R}).$ We claim that by the definition of the topology $\mathcal{T}$ it suffices to establish posterior consistency for every fixed $B$ of the above form. Indeed, by intersecting the sets from the base $\widetilde{\mathcal{V}}$ determined by the subbase $\widetilde{\mathcal{U}}$ from Definition \ref{topology} with $\mathcal{X}(K),$ a base for $\mathcal{T}$ can be obtained. Likewise, a subbase $\mathcal{U}$ for $\mathcal{T}$ is obtained by intersecting the sets from the subbase $\widetilde{\mathcal{U}}$ for $\widetilde{\mathcal{T}}$ with $\mathcal{X}(K).$ By definition, an arbitrary neighbourhood $U_{b_0}$ of $b_0$ contains an open set $\hat{U}_{b_0}\in\mathcal{T}.$ The set $\hat{U}_{b_0}$ is a union of open sets $V$ from the base $\mathcal{V},$ $\hat{U}_{b_0}=\bigcup \{V\in\mathcal{V}:V\subset \hat{U}_{b_0} \}.$ There is at least one $V$ that contains $b_0.$ Fix such $V.$ By definition of the subbase $\mathcal{U}$ this set $V$ can be represented as $V=\bigcap_{j=1}^m U^{b_0}_{f_j,\varepsilon_j}$ for some $m,$ positive numbers $\varepsilon_j,$ bounded continuous functions $f_j$ and sets $U^{b_0}_{f_j,\varepsilon_j}$ from the subbase $\mathcal{U}.$ Note that we have 
\begin{equation*}
{U}_{b_0}^c\subset\hat{U}_{b_0}^c \subset V^c=\bigcup_{j=1}^m (U^{b_0}_{f_j,\varepsilon_j})^c.
\end{equation*}
Since
\begin{multline*}
(U^{b_0}_{f_j,\varepsilon_j})^c=\{ b\in\mathcal{X}(K):\|P_{\Delta}^b f_j-P_{\Delta}^{b_0} f_j\|_{1,\nu}\geq\varepsilon_j \}\\
\subset \left\{ b\in\mathcal{X}(K):\|P_{\Delta}^b f_j -P_{\Delta}^{b_0} f_j \|_{1,\nu}>\frac{\varepsilon_j}{2} \right\},
\end{multline*}
say, the claim becomes obvious. 

The posterior measure of a set $B$ given in \eqref{setB} can be written as
\begin{equation*}
\Pi(B|X_0,\ldots,X_{n\Delta})=\frac{\int_{B}L_n(b)\Pi(\mathrm{d}b)}{\int_{\mathcal{X(K)}}L_n(b)\Pi(\mathrm{d}b)},
\end{equation*}
where
\begin{equation*}
L_n(b)=\frac{\pi_b(X_0)}{\pi_{b_0}(X_0)}\prod_{i=1}^n \frac{p_b(\Delta,X_{(i-1)\Delta},X_{i\Delta})}{p_{b_0}(\Delta,X_{(i-1)\Delta},X_{i\Delta})}
\end{equation*}
is the likelihood ratio. By Lemma \ref{lemma5.3} in Appendix \ref{appendix}, in order to prove the theorem, it suffices to show that
\begin{equation*}
\Pi(B_j^+|X_0,\ldots,X_{n\Delta}) \rightarrow 0, \quad \Pi(B_j^-|X_0,\ldots,X_{n\Delta}) \rightarrow 0, \quad \text{$P_{b_0}$-a.s.}
\end{equation*}
for the sets $B_j^+$ and $B_j^-$ ($j=1,\ldots,N$ for some suitable integer $N>0$) given in the statement of that lemma. We give a brief outline of the remaining part of the proof: thanks to property \eqref{priorcondition} of the prior, by Lemma \ref{lemma5.1} from Appendix \ref{appendix} the prior $\Pi$ has the Kullback-Leibler property in the sense that \eqref{KLproperty} holds. Then by Lemma \ref{lemma5.2} from Appendix \ref{appendix} in order to establish posterior consistency, it suffices to show that $P_{b_0}$-a.s.\ the terms
\begin{equation*}
\sqrt{\int_{B_j^+}L_n(b)\Pi(\mathrm{d}b)}, \quad \sqrt{\int_{B_j^-}L_n(b)\Pi(\mathrm{d}b)},
\end{equation*}
converge to zero exponentially fast. This fact can be proved by the same reasoning as given in the proof of Theorem 3.5 in \cite{meulen} (employing the convergence theorem for a positive supermartingale (see e.g.\ Theorem 22 on p.\ 148 in \cite{pollard}) instead of Doob's martingale convergence theorem on pp.\ 59--60 there\footnote{Note that on p.\ 58 in \cite{meulen} the expression $L_n$ is called the likelihood, although obviously the likelihood ratio is meant.}). This completes the proof.
\end{proof}

\begin{proof}[Proof of Lemma \ref{lemma3.2b}]
The lemma can be proved by arguments similar to those in the proof of Lemma 3.2 in \cite{meulen}. The proof employs Lemma \ref{lemma3.1b} from Appendix \ref{appendixB}, that plays the role of Lemma 3.1 from \cite{meulen} in this context.
\end{proof}

\begin{proof}[Proof of Theorem \ref{mainthm_b}]
The proof is an easy generalisation of the proof of Theorem \ref{mainthm} and  uses lemmas from Appendix \ref{appendixB} instead of Lemmas \ref{lemma5.1}, \ref{lemma5.2} and \ref{lemma5.3} from Appendix \ref{appendix}.
\end{proof}

In the next lemma we verify the claim made at the end of Example \ref{exam_prior2}.

\begin{lemma}
\label{lemmapriorB}
The prior $\Pi$ from Example \ref{exam_prior2} satisfies the requirement \eqref{priorcondition_b}.
\end{lemma}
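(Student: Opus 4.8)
The plan is to verify condition \eqref{priorcondition_b} by reducing it to a statement about finite approximation on compacta, exactly as the $L_2(\mu_{b_0})$-neighbourhood structure of the prior $\Pi$ permits. Fix $b_0 = b_{f_0}$ for some $f_0 \in \mathfrak{F}$ and fix $\varepsilon > 0$. Since $\Pi$ is a countable convex combination of Dirac masses at the points $b_{f_n^{m,\epsilon_l}}$, with every weight $q_{m,1} q_{l,2}/n_{m,l}$ strictly positive, it suffices to exhibit a single triple $(m,l,n)$ for which
\begin{equation*}
\left\{\sum_{i=1}^d \| b_{f_n^{m,\epsilon_l},i} - b_{0,i} \|_{2,\mu_{b_0}}^2 \right\}^{1/2} < \varepsilon.
\end{equation*}
By Remark \ref{rem_drift_md} the density $\pi_{b_0}(x) = C_{b_0}^{-1} e^{-2V_{b_0}(x)}$ decays faster than any polynomial, so $\mu_{b_0}$ has finite moments of all orders; combined with the uniform linear growth bound $\|b(x)\| \leq K_1(1+\|x\|)$ valid for every member of $\mathcal{X}(K_1,K_2)$, this lets me split the integral over $\mathbb{R}^d$ into a contribution from a large cube $[-m,m]^d$ and a tail contribution.

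The key steps, in order, are as follows. First, choose $m$ large enough that the tail integral $\int_{\mathbb{R}^d \setminus [-m,m]^d} \|b(x) - b_0(x)\|^2 \, \mu_{b_0}(dx)$ is bounded by $\varepsilon^2/4$ simultaneously for every $b \in \mathcal{X}(K_1,K_2)$; this is possible because on that region $\|b(x)-b_0(x)\|^2 \leq 2\|b(x)\|^2 + 2\|b_0(x)\|^2 \leq 4K_1^2(1+\|x\|)^2$, a fixed polynomial, and $\int_{\|x\|\ge m}(1+\|x\|)^2 \pi_{b_0}(x)\,dx \to 0$ as $m\to\infty$ by dominated convergence using the superpolynomial decay of $\pi_{b_0}$. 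Second, with $m$ now fixed, use the construction in Example \ref{exam_prior2}: for the chosen $m$ and for $\epsilon_l$ small enough (pick $l$ so that $\epsilon_l$ is small relative to $\varepsilon$ and $\mu_{b_0}([-m,m]^d)^{1/2}$), there is an index $n \leq n_{m,l}$ with $\|b_{f_n^{m,\epsilon_l}} - b_0\|_{d,m,\infty} < \epsilon_l$, i.e. $\sup_{x\in[-m,m]^d}|b_{f_n^{m,\epsilon_l},i}(x) - b_{0,i}(x)| < \epsilon_l$ for every $i$. Third, bound the compact-set contribution: $\int_{[-m,m]^d}\|b_{f_n^{m,\epsilon_l}}(x)-b_0(x)\|^2 \mu_{b_0}(dx) \le d\,\epsilon_l^2 \,\mu_{b_0}([-m,m]^d) \le d\,\epsilon_l^2$, which is below $\varepsilon^2/4$ once $\epsilon_l$ is small enough. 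Adding the two pieces gives the sum of squared $L_2(\mu_{b_0})$ norms below $\varepsilon^2/2 < \varepsilon^2$, hence the displayed inequality, so $\Pi$ assigns this neighbourhood the positive mass $q_{m,1}q_{l,2}/n_{m,l}$.

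The only genuine subtlety — and the step I would be most careful about — is the uniform tail estimate in the first step: it relies on the linear growth constant $K_1$ being common to all members of $\mathcal{X}(K_1,K_2)$ (which holds by property (c) of Definition \ref{drift_class_md}) and on the superpolynomial, in fact exponential-type, decay of $\pi_{b_0}$, which in turn follows from property (b) of Definition \ref{drift_class_md} forcing $V_{b_0}(x) \gtrsim \|x\|^{\alpha_{b_0}}$ for large $\|x\|$ with $\alpha_{b_0}\ge 1$ (cf.\ the analogous one-dimensional computation \eqref{expfast}). Everything else is bookkeeping: the reduction to a single Dirac atom is immediate from positivity of all prior weights, and the compact-set estimate is just the triangle inequality together with the total-boundedness of $\bigotimes_{i=1}^d \mathfrak{B}_{m,i}$ already established in Example \ref{exam_prior2}. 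I would also note in passing that, as remarked in the example, the requirements of Definition \ref{drift_class_md} are genuinely satisfied by the $b_f$, so $b_0$ indeed lies in $\mathcal{X}(K_1,K_2)$ and $\mu_{b_0}$ has the stated density, which is what makes the $L_2(\mu_{b_0})$ norms finite to begin with.
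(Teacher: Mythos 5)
Your argument is correct and is essentially the paper's own proof: split the squared $L_2(\mu_{b_0})$ distance into a compact part controlled by the sup-norm net $\|\cdot\|_{d,m,\infty}$ and a tail controlled uniformly via the common linear growth constant $K_1$ and the exponential-type decay of $\pi_{b_0}$, then invoke the positive weight $q_{m,1}q_{l,2}/n_{m,l}$ of a nearby atom. The only differences are cosmetic (cube versus ball in the splitting, and you correctly carry the square on the sup-norm term, which the paper's display omits as an apparent typo).
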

\begin{proof}
The proof is similar to a demonstration of an analogous property of the prior in Example 4.1 in \cite{meulen}: for every $b\in\mathcal{X}(K_1,K_2)$ and positive integer $m$ we have
\begin{align*}
\sum_{i=1}^d\|b_i-b_{i,0}\|_{2,\mu_{b_0}}^2&= \sum_{i=1}^d \int_{\|x\| \leq m} (b_i(x)-b_{0,i}(x))^2\pi_{b_0}(x)\mathrm{d}x\\
&+\sum_{i=1}^d \int_{\|x\| > m} (b_i(x)-b_{0,i}(x))^2\pi_{b_0}(x)\mathrm{d}x\\
& \leq d \| b-b_{0} \|_{m,d,\infty}\\
&+4K^2d\int_{\|x\| > m}(1+\|x\|)^2\pi_{b_0}(x)\mathrm{d}x.
\end{align*}
Thanks to the fact that $\mu_{b_0}$ has an exponential moment, the second term on the right-hand side can be made less than $\varepsilon^2$ by choosing $m$ large enough. Hence
\begin{multline*}
\Pi\left( b\in\mathcal{X}(K_1,K_2): \sum_{i=1}^d \|  b_i - b_{0,i} \|_{2,\mu_{b_0}}^2 < 2\varepsilon^2 \right)\\
\geq \Pi\left(  b \in\mathcal{X}(K_1,K_2) : \|b-b_0\|_{m,d,\infty}^2<\frac{\varepsilon^2}{d} \right).
\end{multline*}
For $l$ such that $\epsilon_l<\varepsilon/\sqrt{d},$ we have by construction of $\Pi$ that the right-hand side of the above display is bounded from  below by $q_{m,1}q_{l,2}/k_{m,l}>0.$ This completes the proof of the lemma.
\end{proof}

\appendix
\section{  }
\label{appendix}

The following result is a restatement of Lemma 5.1 in \cite{meulen}.

\begin{appxlem}
\label{lemma5.1}
Let
\begin{equation*}
{\rm{KL}}(b_0,b)=\int_{\mathbb{R}}\int_{\mathbb{R}} \pi_{b_0}(x) p_{b_0}(\Delta,x,y)\log\frac{p_{b_0}(\Delta,x,y)}{p_b(\Delta,x,y)}\mathrm{d}x\mathrm{d}y.
\end{equation*}
Then 
for the prior $\Pi$ satisfying property \eqref{priorcondition}, the inequality
\begin{equation}
\label{KLproperty}
\Pi(b\in{\mathcal{X}}(K):{\rm{KL}}(b_0,b)<\varepsilon)>0, \quad \forall \varepsilon>0
\end{equation}
holds.
\end{appxlem}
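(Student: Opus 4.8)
The plan is to bound the transition-density Kullback-Leibler divergence ${\rm KL}(b_0,b)$ by something that is controlled by the $L_2(\mu_{b_0})$-distance $\|b-b_0\|_{2,\mu_{b_0}}$, so that condition \eqref{priorcondition} on the prior immediately yields \eqref{KLproperty}. The natural tool is the Girsanov theorem: on the path space over a time interval of length $\Delta$, the likelihood ratio of the law $P^x_{b_0}$ (started at $x$) against $P^x_b$ has the explicit exponential-martingale form $\exp\bigl(\int_0^\Delta (b_0-b)(X_s)\,\mathrm{d}W_s^{b_0} - \tfrac12\int_0^\Delta (b_0-b)^2(X_s)\,\mathrm{d}s\bigr)$, where $W^{b_0}$ is the driving Brownian motion under $P^x_{b_0}$; here the linear growth condition (a) of Definition \ref{driftclass} is what makes this exponential a genuine martingale (cf.\ Remark \ref{remdriftclass3} and the discussion around \eqref{expmartingale}). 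Taking logarithms and $P^x_{b_0}$-expectations, the stochastic integral term drops out and one gets
\[
{\rm KL}_x(b_0,b):=\ee_{P^x_{b_0}}\!\left[\log\frac{\mathrm{d}P^x_{b_0}}{\mathrm{d}P^x_b}\Big|\mathcal{F}_\Delta\right]=\frac12\,\ee_{P^x_{b_0}}\!\int_0^\Delta (b_0-b)^2(X_s)\,\mathrm{d}s.
\]
By conditioning on the endpoint $X_\Delta=y$ and using that the full-path KL divergence dominates the KL divergence of the marginal at time $\Delta$ (the data-processing / chain-rule inequality for relative entropy), one obtains $\int_{\mathbb{R}} p_{b_0}(\Delta,x,y)\log\frac{p_{b_0}(\Delta,x,y)}{p_b(\Delta,x,y)}\,\mathrm{d}y\le {\rm KL}_x(b_0,b)$.

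Next I would integrate in $x$ against the stationary density $\pi_{b_0}$ and invoke stationarity of $X$ under $P_{b_0}=P^{\mu_{b_0}}_{b_0}$: by Fubini and the fact that $X_s\sim\mu_{b_0}$ for every $s$,
\[
{\rm KL}(b_0,b)\le\frac12\int_{\mathbb{R}}\ee_{P^x_{b_0}}\!\left[\int_0^\Delta (b_0-b)^2(X_s)\,\mathrm{d}s\right]\pi_{b_0}(x)\,\mathrm{d}x=\frac{\Delta}{2}\int_{\mathbb{R}}(b_0-b)^2(x)\,\pi_{b_0}(x)\,\mathrm{d}x=\frac{\Delta}{2}\,\|b-b_0\|_{2,\mu_{b_0}}^2.
\]
Given $\varepsilon>0$, choose $\varepsilon'=\sqrt{2\varepsilon/\Delta}$; then $\{b:\|b-b_0\|_{2,\mu_{b_0}}<\varepsilon'\}\subset\{b:{\rm KL}(b_0,b)<\varepsilon\}$, and by \eqref{priorcondition} the former has positive prior mass, hence so does the latter. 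This proves \eqref{KLproperty}.

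The main obstacle is the rigorous justification of the Girsanov-based identity and the interchange of expectation and (double) integral: one must verify that the Novikov-type or linear-growth condition genuinely makes $\exp(\int_0^\Delta(b_0-b)(X_s)\,\mathrm{d}W^{b_0}_s-\tfrac12\int_0^\Delta(b_0-b)^2(X_s)\,\mathrm{d}s)$ a true martingale (so that the stochastic-integral term has zero mean and the change of measure is legitimate over $[0,\Delta]$), that ${\rm KL}_x(b_0,b)$ is finite $\mu_{b_0}$-a.e.\ and indeed $\mu_{b_0}$-integrable — this is where the exponential decay of $\pi_{b_0}$ from \eqref{expfast}, combined with the linear growth $|b_0(x)-b(x)|\le 2K(1+|x|)$, guarantees $\|b-b_0\|_{2,\mu_{b_0}}<\infty$ (cf.\ Remark \ref{remdriftclass3}) — and that the chain-rule inequality relating path-space and marginal relative entropies applies. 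Since the lemma is merely a restatement of Lemma 5.1 in \cite{meulen}, the cleanest route is to note that all the estimates above are exactly those carried out there, the only new point being that our linear growth condition (a), rather than uniform boundedness, still suffices for every step; the finiteness of ${\rm K}(\mu_{b_1},\mu_{b_2})$ noted in Remark \ref{remdriftclass3} takes care of the contribution of the initial density $\pi_b(X_0)/\pi_{b_0}(X_0)$, which plays no role in the definition of ${\rm KL}(b_0,b)$ but is needed in the companion arguments.
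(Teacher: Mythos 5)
Your proposal is correct, and it reaches exactly the bound the paper relies on, ${\rm KL}(b_0,b)\leq\tfrac{\Delta}{2}\|b-b_0\|_{2,\mu_{b_0}}^2$, but by a slightly different decomposition. The paper simply invokes the proof of Lemma 5.1 in \cite{meulen}, which compares the \emph{stationary} path laws $\mathcal{L}_1,\mathcal{L}_2$ on $[0,\Delta]$ (each initialised at its own invariant measure) and uses the chain rule/data-processing inequality in the form ${\rm KL}(b_0,b)\leq -{\rm K}(\mu_{b_0},\mu_b)+{\rm K}(\mathcal{L}_1,\mathcal{L}_2)$ together with the Girsanov computation ${\rm K}(\mathcal{L}_1,\mathcal{L}_2)={\rm K}(\mu_{b_0},\mu_b)+\tfrac{\Delta}{2}\|b-b_0\|_{2,\mu_{b_0}}^2$ as in \eqref{nomeri2}; the only new content of the paper's proof is to verify that, under the linear growth condition and the exponential decay \eqref{expfast} of $\pi_{b_0}$, both ${\rm K}(\mu_{b_0},\mu_b)$ and the $L_2$ term are finite, so that the cancellation of the invariant-measure terms is legitimate and ${\rm KL}(b_0,b)$ is finite. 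You instead condition on the starting point $x$, apply Girsanov and data processing to $P^x_{b_0}$ versus $P^x_b$, and then integrate against $\pi_{b_0}$ using stationarity. This route buys you something: the initial densities never enter, so you do not need the finiteness of ${\rm K}(\mu_{b_0},\mu_b)$ at all for this lemma (it is still needed elsewhere, as you note), and the potential $\infty-\infty$ issue in the stationary decomposition never arises; what you still must justify, as you correctly flag, is that the linear growth condition makes the exponential change of measure a true martingale (Corollary 5.16 on p.~200 in \cite{karatzas}, exactly as used around \eqref{expmartingale}) and that $\ee_{P^x_{b_0}}\int_0^{\Delta}(b-b_0)^2(X_s)\,\mathrm{d}s$ is finite so the stochastic integral has zero mean. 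With those points checked, your argument is complete and, if anything, slightly more self-contained than the paper's reference back to \cite{meulen}.
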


\begin{proof}
The same proof as in \cite{meulen} goes through. The only additional clarification we would like to make concerns finiteness of ${\rm{KL}}(b_0,b).$ The latter follows from the inequality in the proof of Lemma 5.1 in \cite{meulen},
\begin{equation*}
{\rm{KL}}(b_0,b) \leq - {\rm{K}}(\mu_{b_0},\mu_b)+{\rm{K}}(\mathcal{L}_1,\mathcal{L}_2),
\end{equation*}
where
\begin{multline}
\label{nomeri2}
{\rm{K}}(\mathcal{L}_1,\mathcal{L}_2)\\=\ex_{{P}_{b_0}}\left[ \log\frac{\pi_{b_0}(X_0)}{\pi_{b}(X_0)}-\int_0^{\Delta} (b(X_s)-b_0(X_s))dW_s+\frac{1}{2}\int_0^{\Delta} (b(X_s)-b_0(X_s))^2\mathrm{d}s \right]\\
={\rm{K}}(\mu_{b_0},\mu_{b})+\frac{\Delta}{2}\|b-b_0\|_{2,\mu_{b_0}}^2
\end{multline}
is the Kullback-Leibler divergence between the laws $\mathcal{L}_1=\mathcal{L}(\{X_t,t\in[0,\Delta]\}|{P}_{b_0})$ and $\mathcal{L}_2=\mathcal{L}(X_t,t\in[0,\Delta]|{P}_{b})$ of the full path $\{X_t,t\in[0,\Delta]\}$ under $P_{b_0}$ and $P_b,$ respectively, while ${\rm{K}}(\mu_{b_0},\mu_b)$ is the Kullback-Leibler divergence between the two invariant measures $\mu_{b_0}$ and $\mu_b.$ The second term on the right-hand side of the last equality in \eqref{nomeri2} is finite by the exponential decay property of $\pi_{b_0},$ cf.\ formula \eqref{expfast}. Furthermore, we have
\begin{align*}
{\rm{K}}(\mu_{b_0},\mu_{b})&=\int_{\mathbb{R}}\pi_{b_0}(x)\log\frac{\pi_{b_0}(x)}{\pi_{b}(x)}\mathrm{d}x\\
&\leq\left|\log\frac{m_{b}(\mathbb{R})}{m_{b_0}(\mathbb{R})}\right|+4K\int_{\mathbb{R}}\left(|x|+\frac{x^2}{2}\right)\pi_{b_0}(x)\mathrm{d}x.
\end{align*}
This implies finiteness of the first term on the right-hand side of the last equality in \eqref{nomeri2}, and hence of ${\rm{KL}}(b_0,b)$ too.
\end{proof}

The next lemma is a restatement of Lemma 5.2 in \cite{meulen}, cf.\ also the proof of formula (2.1) in \cite{tang}.

\begin{appxlem}
\label{lemma5.2}
Suppose that a prior $\Pi$ has property \eqref{KLproperty}. If for a sequence $C_n$ of measurable subsets of ${\mathcal{X}}(K)$ there exists a constant $c>0,$ such that
\begin{equation*}
e^{nc}\int_{C_n}L_n(b)\Pi(\mathrm{d}b)\rightarrow 0, \quad \text{$P_{b_0}$-a.s.},
\end{equation*}
then
\begin{equation}
\Pi(C_n|X_0,\ldots,X_{\Delta n})\rightarrow 0, \quad \text{$P_{b_0}$-a.s.}
\end{equation}
as $n\rightarrow\infty.$
\end{appxlem}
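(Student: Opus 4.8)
The plan is to use the classical denominator lower bound argument for Bayesian consistency (cf.\ \cite{walker}, \cite{tang}, \cite{ghosal06}), in its form for stationary ergodic Markov chains. Write
\[
\Pi(C_n\mid X_0,\ldots,X_{n\Delta})=\frac{\int_{C_n}L_n(b)\,\Pi(\mathrm{d}b)}{\int_{\mathcal{X}(K)}L_n(b)\,\Pi(\mathrm{d}b)}=:\frac{N_n}{D_n}.
\]
By hypothesis $e^{nc}N_n\to 0$ $P_{b_0}$-a.s., and $N_n/D_n=(e^{nc}N_n)/(e^{nc}D_n)$, so it suffices to prove that $e^{nc}D_n\to\infty$ $P_{b_0}$-a.s. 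I will in fact establish the stronger statement that for \emph{every} $\varepsilon>0$ one has $e^{n\varepsilon}D_n\to\infty$ $P_{b_0}$-a.s., and then specialise to $\varepsilon=c$.

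The first ingredient is a pointwise-in-$b$ strong law for $\frac1n\log L_n(b)$. For fixed $b\in\mathcal{X}(K)$,
\[
\frac1n\log L_n(b)=\frac1n\log\frac{\pi_b(X_0)}{\pi_{b_0}(X_0)}-\frac1n\sum_{i=1}^n g_b(X_{(i-1)\Delta},X_{i\Delta}),
\]
where $g_b(x,y)=\log\big(p_{b_0}(\Delta,x,y)/p_b(\Delta,x,y)\big)$. Since $0<\pi_b(X_0),\pi_{b_0}(X_0)<\infty$ by Proposition \ref{classchi}, the first term is a finite random variable divided by $n$ and thus tends to $0$ $P_{b_0}$-a.s. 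For the second term, under $P_{b_0}$ the sampled chain $(X_{i\Delta})_{i\ge0}$ is stationary (it is initialised at $\mu_{b_0}$) and ergodic (its transition density $p_{b_0}(\Delta,\cdot,\cdot)$ is everywhere positive by Proposition \ref{classchi}, so the $\Delta$-skeleton is $\mu_{b_0}$-irreducible with invariant probability $\mu_{b_0}$); hence so is the pair process $(X_{(i-1)\Delta},X_{i\Delta})_{i\ge1}$. Moreover $g_b(X_0,X_\Delta)$ is $P_{b_0}$-integrable: its negative part is automatically integrable, and its mean ${\rm{KL}}(b_0,b)$ is finite (see the proof of Lemma \ref{lemma5.1}), so its positive part is integrable as well. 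Birkhoff's ergodic theorem then yields $\frac1n\sum_{i=1}^n g_b(X_{(i-1)\Delta},X_{i\Delta})\to{\rm{KL}}(b_0,b)$ $P_{b_0}$-a.s., whence
\[
\frac1n\log L_n(b)\longrightarrow-{\rm{KL}}(b_0,b),\qquad\text{$P_{b_0}$-a.s.}
\]

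Now fix $\varepsilon>0$ and set $A_\varepsilon=\{b\in\mathcal{X}(K):{\rm{KL}}(b_0,b)<\varepsilon\}$; by the Kullback--Leibler property \eqref{KLproperty}, $\Pi(A_\varepsilon)>0$. Using the (tacitly assumed, cf.\ Remark \ref{remlikelihood2}) joint measurability of $(b,\omega)\mapsto L_n(b)(\omega)$ together with Fubini's theorem applied to the finite measure $\Pi|_{A_\varepsilon}\otimes P_{b_0}$, the $b$-dependent exceptional sets of the previous paragraph can be amalgamated: there is a $P_{b_0}$-full event on which $\frac1n\log L_n(b)\to-{\rm{KL}}(b_0,b)$ holds simultaneously for $\Pi$-a.e.\ $b\in A_\varepsilon$. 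On that event, for $\Pi$-a.e.\ $b\in A_\varepsilon$,
\[
e^{n\varepsilon}L_n(b)=\exp\!\big(n(\varepsilon+\tfrac1n\log L_n(b))\big)\longrightarrow\infty,
\]
because $\varepsilon+\frac1n\log L_n(b)\to\varepsilon-{\rm{KL}}(b_0,b)>0$. Since $e^{n\varepsilon}L_n(b)\ge0$, Fatou's lemma applied in the variable $b$ with respect to $\Pi|_{A_\varepsilon}$ gives
\[
\liminf_{n\to\infty}\int_{A_\varepsilon}e^{n\varepsilon}L_n(b)\,\Pi(\mathrm{d}b)\ \ge\ \int_{A_\varepsilon}\Big(\liminf_{n\to\infty}e^{n\varepsilon}L_n(b)\Big)\,\Pi(\mathrm{d}b)\ =\ \infty,
\]
the last equality because the integrand equals $+\infty$ on a set of positive $\Pi$-mass. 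Since $D_n\ge\int_{A_\varepsilon}L_n(b)\,\Pi(\mathrm{d}b)$, a fortiori $e^{n\varepsilon}D_n\to\infty$ $P_{b_0}$-a.s.; taking $\varepsilon=c$ completes the proof.

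I expect the genuine work to lie exactly in the points sketched loosely above: that the $\Delta$-skeleton $(X_{i\Delta})$ is ergodic and not merely stationary; that $g_b$ is $P_{b_0}$-integrable, which is precisely why the finiteness clarification in the proof of Lemma \ref{lemma5.1} is needed; and the measurability/Fubini step that converts "for each $b$, convergence holds $P_{b_0}$-a.s." into "$P_{b_0}$-a.s., convergence holds for $\Pi$-a.e.\ $b$", so that Fatou can be invoked in $b$. Under the standing assumptions none of these is serious, and the manipulation of the ratio $N_n/D_n$ is entirely routine; this is why the lemma can be asserted to follow as in \cite{meulen}.
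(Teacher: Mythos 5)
Your argument is correct and is essentially the paper's proof: the paper simply defers to Lemma 5.2 of \cite{meulen} together with a strong law of large numbers for ergodic sequences (Theorems 3.5.7--3.5.8 in \cite{stout}), and that cited proof is exactly the denominator bound you reconstruct — ergodic SLLN for $\tfrac1n\log L_n(b)$, the Kullback--Leibler support property \eqref{KLproperty}, and Fatou's lemma in $b$ to force $e^{nc}\int_{\mathcal{X}(K)}L_n(b)\,\Pi(\mathrm{d}b)\to\infty$ $P_{b_0}$-a.s. Your treatment of the integrability of $\log\big(p_{b_0}/p_b\big)$ and of the measurability/Fubini step matches the clarifications the paper relegates to Lemma \ref{lemma5.1} and Remark \ref{remlikelihood2}.
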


\begin{proof}
The proof of Lemma 5.2 in \cite{meulen} remains applicable. The required version of the strong law of large numbers for ergodic sequences invoked in \cite{meulen} follows for instance from Theorems 3.5.8 and 3.5.7 in \cite{stout}.
\end{proof}

The next lemma is a restatement of Lemma 5.3 in \cite{meulen}, the proof of which employs some arguments from \cite{tang}.

\begin{appxlem}
\label{lemma5.3}
Fix $\varepsilon>0$ such that $\varepsilon\leq 2\nu(\mathbb{R}),$ take a fixed $f\in\mathcal{C}_{bdd}(\mathbb{R})$ such that $\|f\|_{\infty}\leq 1,$ and write
\begin{equation*}
B=\{ b\in\mathcal{X}(K):\| P_{\Delta}^b f - P_{\Delta}^{b_0} f \|_{1,\nu} > \varepsilon \}.
\end{equation*}
Then there exist a compact set $F\subset\mathbb{R},$ an integer $N>0$ and bounded intervals $I_1,\ldots,I_N$ covering $F,$ such that
\begin{equation*}
B\subset \left(\bigcup_{j=1}^N B_j^+\right) \bigcup\left( \bigcup_{j=1}^N B_j^-\right),
\end{equation*}
where
\begin{align*}
B_j^+&= \left\{ b\in B : P_{\Delta}^b f(x) - P_{\Delta}^{b_0} f(x) > \frac{\varepsilon}{4\nu(F)},\forall x\in I_j \right\} ,\\
B_j^-&= \left\{ b\in B : P_{\Delta}^b f(x) - P_{\Delta}^{b_0} f(x) < -\frac{\varepsilon}{4\nu(F)},\forall x\in I_j \right\}.
\end{align*}
\end{appxlem}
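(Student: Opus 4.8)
The statement is purely a covering/topological claim about continuous functions on $\mathbb{R}$; the probabilistic structure (the fact that $P_\Delta^b f$ comes from a diffusion) enters only through the single fact that each $P_\Delta^b f$ is a bounded continuous function and, crucially, that the family $\{P_\Delta^b f : b\in\mathcal{X}(K)\}$ is locally uniformly equicontinuous. The first step is therefore to establish this equicontinuity: since $\mathcal{X}(K)$ is a locally uniformly equicontinuous family of drift coefficients (Definition \ref{chik}) and the members of $\widetilde{\mathcal{X}}(K)$ satisfy a uniform linear growth bound, one obtains a uniform modulus of continuity for the transition operators $P_\Delta^b f$ on each compact set. This is exactly the role played by the analogue of Lemma 5.3 in \cite{meulen}, and the argument there carries over; concretely one uses the representation of $P_\Delta^b f(x)$ as $\ee_x[f(X_\Delta)]$ together with coupling/Gronwall-type estimates on $|X_\Delta^x - X_\Delta^{x'}|$ that are uniform over $b\in\mathcal{X}(K)$, the uniformity coming from the linear growth constant $K$ and the local uniform equicontinuity of the drifts. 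I would state this as the key preliminary lemma and then reduce the theorem to it.

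**Reducing to a covering argument.** Granting local uniform equicontinuity of $\{P_\Delta^b f - P_\Delta^{b_0} f : b\in\mathcal{X}(K)\}$, fix $\varepsilon>0$ with $\varepsilon\le 2\nu(\mathbb{R})$ and $f$ with $\|f\|_\infty\le 1$, and take $b\in B$, so $\|P_\Delta^b f - P_\Delta^{b_0} f\|_{1,\nu}>\varepsilon$. Because $\nu$ is finite, there is a compact set $F\subset\mathbb{R}$ (independent of $b$) with $\nu(F^c)<\varepsilon/2$; hence $\int_F |P_\Delta^b f - P_\Delta^{b_0} f|\,\mathrm{d}\nu > \varepsilon/2$ for every $b\in B$, using $\|P_\Delta^b f - P_\Delta^{b_0} f\|_\infty\le 2$. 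This forces the existence of a point $x=x_b\in F$ with $|P_\Delta^b f(x_b) - P_\Delta^{b_0} f(x_b)| > \varepsilon/(2\nu(F))$; otherwise the integral over $F$ would be at most $(\varepsilon/(2\nu(F)))\,\nu(F)=\varepsilon/2$.

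**Upgrading a single point to an interval, uniformly.** Now I invoke equicontinuity: choose $\delta>0$ so that $\sup_{b\in\mathcal{X}(K)}|(P_\Delta^b f - P_\Delta^{b_0} f)(x)-(P_\Delta^b f - P_\Delta^{b_0} f)(y)| < \varepsilon/(4\nu(F))$ whenever $x,y\in F$ and $|x-y|<\delta$. Then on the interval $I(x_b)=(x_b-\delta,x_b+\delta)$ the sign of $P_\Delta^b f - P_\Delta^{b_0} f$ is constant and its absolute value exceeds $\varepsilon/(4\nu(F))$, so $b$ lies in a set of the form $\{b : P_\Delta^b f - P_\Delta^{b_0} f > \varepsilon/(4\nu(F))\ \forall x\in I(x_b)\}$ or its negative counterpart. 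Finally, since $F$ is compact it is covered by finitely many of the intervals $\{(x-\delta,x+\delta): x\in F\}$, say $I_1,\dots,I_N$; every $b\in B$ has its point $x_b$ in some $I_j$, and $I(x_b)\supset (y-\delta/2,y+\delta/2)$ for the center $y$ of that $I_j$ provided the covering is chosen with mesh $\delta/2$ — a minor bookkeeping point I would handle by taking the $I_j$ to be the $\delta/2$-balls and working with $\delta$-neighbourhoods. This exhibits $B\subset(\bigcup_{j=1}^N B_j^+)\cup(\bigcup_{j=1}^N B_j^-)$ with $B_j^\pm$ as in the statement, after renaming constants so that the threshold is $\varepsilon/(4\nu(F))$ on the $I_j$ themselves.

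**Main obstacle.** The only nontrivial input is the local uniform equicontinuity of the family $\{P_\Delta^b f : b\in\mathcal{X}(K)\}$; everything else is elementary measure theory and compactness. That equicontinuity is where the linear growth condition and the local uniform equicontinuity of the drift class are genuinely used, via moment and flow-comparison bounds for the SDE that are uniform in $b$. I expect this to be handled exactly as in the corresponding lemma of \cite{meulen}, adapted to unbounded-but-linearly-growing drifts, and so in the write-up I would cite that argument and emphasize that the uniform constants survive the relaxation from boundedness to linear growth, pointing to the estimates in the proof of Lemma \ref{lemma6.1} in Appendix \ref{appendix} for the technical underpinnings.
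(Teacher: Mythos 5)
Your proof is correct and follows essentially the same route as the paper, which likewise reduces the lemma to the local uniform equicontinuity of the family $\{P_{\Delta}^b f: b\in\mathcal{X}(K)\}$ (Lemma \ref{lemma6.1}, the analogue of Lemma A.1 in \cite{meulen}) and then lets the covering argument from the proof of Lemma 5.3 in \cite{meulen} carry over --- precisely the tightness-of-$\nu$ plus equicontinuity argument you spell out. Three cosmetic repairs: choose $F$ with $\nu(F^c)\leq\varepsilon/4$ (with $\nu(F^c)<\varepsilon/2$ your displayed bound only gives $\int_F |P_{\Delta}^b f-P_{\Delta}^{b_0} f|\,\mathrm{d}\nu>0$, not $>\varepsilon/2$); apply the equicontinuity on a compact set slightly enlarging $F$ so that it controls the oscillation on the whole intervals $I_j$, which need not lie inside $F$; and note that Lemma \ref{lemma6.1} is proved via the Girsanov representation $P_{\Delta}^b f(x)=\ex[f(x+W_{\Delta})L_x]$ with H\"older and Burkholder--Davis--Gundy bounds, not via Gronwall/coupling estimates on flows, which would require Lipschitz drifts that are not assumed for $\mathcal{X}(K)$.
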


\begin{proof}
The proof of this lemma requires Lemma \ref{lemma6.1} that corresponds to Lemma A.1 in \cite{meulen}. The arguments from the proof of Lemma 5.3 in \cite{meulen} carry over.
\end{proof}

The next lemma is an adaptation of Lemma A.1 in \cite{meulen}, but in its proof we need somewhat different arguments than those used in \cite{meulen}.

\begin{appxlem}
\label{lemma6.1}
For every fixed $f\in\mathcal{C}_{bdd}(\mathbb{R}),$ the family $\{P_{\Delta}^bf:b\in{\mathcal{X}}(K)\}$ is locally uniformly equicontinuous.
\end{appxlem}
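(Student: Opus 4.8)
The plan is to show that for a fixed $f\in\mathcal{C}_{bdd}(\mathbb{R})$, the family $\{P_{\Delta}^b f:b\in\mathcal{X}(K)\}$ is equicontinuous on every compact interval, and the natural route is a Girsanov-type representation combined with the linear growth bound. First I would express the difference $P_{\Delta}^b f(x)-P_{\Delta}^b f(x')$ probabilistically: writing $X^{b,x}$ for the solution of \eqref{sde} started at $x$ with drift $b$, we have $P_{\Delta}^b f(x)=\ee[f(X^{b,x}_{\Delta})]$, and I would pass to the canonical Wiener measure via the Girsanov theorem, so that $P_{\Delta}^b f(x)=\ee_Q\left[f(x+W_{\Delta})\,\mathcal{E}^{b,x}_{\Delta}\right]$, where
\begin{equation*}
\mathcal{E}^{b,x}_{\Delta}=\exp\left(\int_0^{\Delta} b(x+W_s)\,\mathrm{d}W_s-\frac{1}{2}\int_0^{\Delta} b(x+W_s)^2\,\mathrm{d}s\right).
\end{equation*}
The key point is that the linear growth condition (a) in Definition \ref{driftclass}, uniform in $b\in\mathcal{X}(K)$, makes Novikov's condition hold uniformly and, more importantly, gives uniform $L_p(Q)$ bounds on $\mathcal{E}^{b,x}_{\Delta}$ for $x$ in a compact set (this is essentially the content of formula \eqref{expmartingale} referred to in Remark \ref{remdriftclass3}).

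With this representation in hand, the difference $P_{\Delta}^b f(x)-P_{\Delta}^b f(x')$ splits into two pieces: one where the argument of $f$ is shifted, $\ee_Q\left[(f(x+W_{\Delta})-f(x'+W_{\Delta}))\mathcal{E}^{b,x}_{\Delta}\right]$, and one where the Radon--Nikodym density is shifted, $\ee_Q\left[f(x'+W_{\Delta})(\mathcal{E}^{b,x}_{\Delta}-\mathcal{E}^{b,x'}_{\Delta})\right]$. For the first piece I would use uniform continuity of $f$ together with the uniform $L_2(Q)$ bound on $\mathcal{E}^{b,x}_{\Delta}$ and Cauchy--Schwarz; since $f$ is bounded and uniformly continuous on $\mathbb{R}$, this is controlled uniformly in $b$ and in $x$. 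For the second piece I would estimate $\ee_Q|\mathcal{E}^{b,x}_{\Delta}-\mathcal{E}^{b,x'}_{\Delta}|$ by writing the difference of exponentials as an integral of its derivative along the segment, or equivalently bounding $|e^a-e^{a'}|\le(e^a+e^{a'})|a-a'|$, and then controlling $\ee_Q|A^{b,x}_{\Delta}-A^{b,x'}_{\Delta}|^2$ where $A$ denotes the exponent; the increments of the stochastic and Lebesgue integrals are controlled, via It\^o's isometry and the linear growth bound, by $\ee_Q\int_0^{\Delta}(b(x+W_s)-b(x'+W_s))^2(1+\cdots)\mathrm{d}s$, which tends to $0$ as $x'\to x$ by local uniform equicontinuity of the class $\mathcal{X}(K)$ (Definition \ref{chik}) and dominated convergence, uniformly over $b\in\mathcal{X}(K)$.

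The main obstacle is making all these bounds genuinely uniform in $b\in\mathcal{X}(K)$ simultaneously: one needs the uniform $L_p(Q)$ control of the exponential martingales (which is where the single uniform constant $K$ in the linear growth condition is essential, not merely a $b$-dependent one), and one needs the local uniform equicontinuity of $\mathcal{X}(K)$ to ensure that the modulus of continuity in the second piece can be chosen independent of $b$. Once these two uniformities are secured, combining the two estimates shows that for every compact $F$ and every $\eta>0$ there is $\delta>0$ with $\sup_{b\in\mathcal{X}(K)}\sup_{x,x'\in F,\,|x-x'|<\delta}|P_{\Delta}^b f(x)-P_{\Delta}^b f(x')|<\eta$, which is exactly local uniform equicontinuity of the family $\{P_{\Delta}^b f:b\in\mathcal{X}(K)\}$. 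This is the step where the replacement of the boundedness assumption of \cite{meulen} by the linear growth condition forces the extra work alluded to in Section \ref{discussion}, and it is carried out in detail in the proof that follows.
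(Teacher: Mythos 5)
Your overall strategy coincides with the paper's: the Girsanov representation $P_{\Delta}^b f(x)=\ee[f(x+W_{\Delta})L_x]$, the split into a term where $f$ is shifted and a term where the exponential density is shifted, the elementary inequality for differences of exponentials, and It\^o/BDG plus localisation of the Brownian path to exploit the local uniform equicontinuity of $\mathcal{X}(K)$. However, there is a genuine gap at the step you label as the "key point": the linear growth condition does \emph{not}, for an arbitrary fixed $\Delta$, give uniform-in-$b$ $L_p(Q)$ bounds (in particular not $L_2$ bounds) on the exponential $\mathcal{E}^{b,x}_{\Delta}$. After using the exponential-martingale identity \eqref{expmartingale} and Cauchy--Schwarz, what remains to be controlled is $\ee[\exp(c\int_0^{\Delta}b^2(x+W_s)\mathrm{d}s)]$ with $c>1$, and under linear growth this reduces to an exponential moment of $K^2\Delta\sup_{s\le\Delta}W_s^2$, which is finite only when $K\Delta$ is small (the paper needs $K\Delta<1/(2\sqrt{\widetilde q})$, with the H\"older exponent $q$ chosen close to $1$). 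For $K\Delta$ not small the bound genuinely fails uniformly over $\mathcal{X}(K)$: since the constants $M_b$ in Definition \ref{driftclass}(b) are not uniform, drifts behaving like $+Kx$ on an arbitrarily large central region are admissible, and for these $\ee[L_x^q]$ can be made arbitrarily large for any fixed $q>1$. This is exactly why the paper's proof begins with a reduction, via the semigroup property, to a small time step satisfying \eqref{conditionK}, and then calibrates the H\"older exponents $r,q$ to that constraint; your proposal omits this reduction entirely, and Novikov's condition alone cannot replace it (it gives the martingale property, not higher moments).

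A second, smaller flaw: you assert that $f$ is "bounded and uniformly continuous on $\mathbb{R}$", but $f\in\mathcal{C}_{bdd}(\mathbb{R})$ is only bounded and continuous, hence uniformly continuous merely on compacts. The paper repairs this by splitting on the events $\{|W_{\Delta}|\le R\}$ and $\{|W_{\Delta}|>R\}$, using boundedness of $f$ and a large $R$ on the second event; the same localisation is also what turns your appeal to "dominated convergence, uniformly over $b$" into an honest uniform estimate, since on $\{\sup_{s\le\Delta}|W_s|\le R\}$ the arguments of $b$ stay in a fixed compact $F'$ where the uniform modulus of continuity of the class $\mathcal{X}(K)$ applies, while the complementary event is handled by Cauchy--Schwarz and the linear growth bound. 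With the small-$\Delta$ reduction and these localisations added, your argument becomes essentially the paper's proof.
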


\begin{proof}
By definition we need to show that the family $\{P_{\Delta}^bf:b\in{\mathcal{X}}(K)\}$ is uniformly equicontinuous whenever the argument $x$ of $P_{\Delta}^bf(x)$ is restricted to an arbitrary compact set $F.$
Since the transition operators form a semigroup, it is enough to prove the latter claim for $\Delta$ small enough, in particular for $\Delta$ satisfying
\begin{equation}
\label{conditionK}
K\Delta<\frac{1}{2}.
\end{equation}
In fact, we have
\begin{equation*}
| P_{\Delta}^b f(x) - P_{\Delta}^b f(y) | \leq P_{\Delta/2}^b | P_{\Delta/2}^b f(x) - P_{\Delta/2}^b f(y) |,
\end{equation*}
and if $\{P_{\Delta/2}^bf:b\in{\mathcal{X}}(K)\}$ is uniformly equicontinuous when the argument $x$ ranges in $F,$ then it is immediately seen that so is $\{P_{\Delta}^bf:b\in{\mathcal{X}}(K)\},$ while if not, then we can reiterate the same argument, but now with $\Delta/2$ and $\Delta/4$ instead of $\Delta$ and $\Delta/2$ and so on, until \eqref{conditionK} is met.

Fix a compact set $F$ and let
\begin{equation*}
l_u=\int_0^{\Delta} b(u+W_s)dW_s-\frac{1}{2}\int_0^{\Delta} b^2(u+W_s)\mathrm{d}s, \quad L_u=e^{l_u}
\end{equation*}
for a standard Brownian motion $W.$ Then as in \cite{meulen}, it can be shown that
\begin{equation*}
P_{\Delta}^bf(x)=\ex [f(x+W_{\Delta})L_x],
\end{equation*}
where the expectation is evaluated under the Wiener measure (the Girsanov theorem invoked in \cite{meulen} is applicable in our case thanks to the linear growth condition and Corollary 5.16 on p.\ 200 in \cite{karatzas}). Also
\begin{align*}
| P_{\Delta}^bf(x) - P_{\Delta}^bf(y) | & \leq \ex[|f(x+W_{\Delta})||L_x-L_y|]+\ex[ L_y|f(x+W_{\Delta})-f(y+W_{\Delta})| ]\\
&:=S_1+S_2,
\end{align*}
where $x,y\in F$. We will bound the two terms $S_1$ and $S_2$ separately.

By \eqref{conditionK} there exists $\widetilde{q}>1,$ such that
\begin{equation}
\label{kd}
K\Delta<\frac{1}{2\sqrt{\widetilde{q}}}.
\end{equation}
Fix such $\widetilde{q}$ and let $q$ be determined as that root of the equation
\begin{equation}
\label{qtilde}
\widetilde{q}=2 \left( q^2 - \frac{q}{2} \right)
\end{equation}
that is larger than $1.$ Next set $r=q/(q-1).$ Note that $r>1$ and that $1/r+1/q=1.$

To bound $S_1,$ we apply an elementary inequality $|e^a-e^b|\leq |a-b||e^a+e^b|$ for $a,b\in\mathbb{R}$ and H\"older's inequality with exponents $r$ and $q$ defined as above to obtain
\begin{align*}
S_1 & \leq \|f\|_{\infty}  \ex[|L_x-L_y|]  \\
&\leq \|f\|_{\infty} \ex[|l_x-l_y||L_x+L_y|]\\
& \leq \|f\|_{\infty} \{\ex[|l_x-l_y|^r]\}^{1/r}\{\ex[|L_x+L_y|^{q}]\}^{1/q}.
\end{align*}
In order to bound $S_1,$ we hence need to bound the last two factors on the right-hand side of the last inequality in the above display. We first treat the first of these two. Note that
\begin{equation}
\label{eq6.1}
\begin{split}
l_x-l_y&=\int_0^{\Delta} (b(x+W_s) - b(y+W_s) )dW_s - \frac{1}{2} \int_0^{\Delta} (b^2(x+W_s) - b^2(y+W_s) )\mathrm{d}s\\
&:=S_3+S_4.
\end{split}
\end{equation}
The $c_r$-inequality yields that in order to bound $\{\ex[|l_x-l_y|^r]\}^{1/r},$ it is enough to bound $\ex[|S_3|^r]$ and $\ex[|S_4|^r].$ We bound the first of these two expectations as follows: by the Burkholder-Davis-Gundy inequality, see Theorem 3.28 on p.\ 166 in \cite{karatzas},
\begin{multline*}
\ex\left[ \left| \int_0^{\Delta} (b(x+W_s) - b(y+W_s) )dW_s \right|^r \right] \\
\leq C_r \ex\left[ \left| \int_0^{\Delta} (b(x+W_s) - b(y+W_s) )^2 \mathrm{d}s \right|^{r/2} \right],
\end{multline*}
where $C_r>0$ is a universal constant independent of $b,x$ and $y.$ For a constant $R>0$ and the set $F^{\prime}=\{ u+v:u\in F,v\in [-R,R]\}$ by the Cauchy-Schwarz inequality the expectation on the right-hand side of the above display can be handled as follows:
\begin{multline*}
\ex\left[ \left| \int_0^{\Delta} (b(x+W_s) - b(y+W_s) )^2  \mathrm{d}s\right|^{r/2} 1_{ [\sup_{s \leq {\Delta}} |W_s| \leq R ] } \right]\\
+\ex\left[ \left| \int_0^{\Delta} (b(x+W_s) - b(y+W_s) )^2  \mathrm{d}s\right|^{r/2} 1_{ [\sup_{s \leq {\Delta}} |W_s| > R ] } \right]\\
\leq {\Delta}^{r/2} \sup_{ \substack{ u,v\in F^{\prime} \\ |u-v| \leq |x-y| } } |b(u)-b(v)|^r\\
+\left\{ \ex\left[ \left| \int_0^{\Delta} (b(x+W_s) - b(y+W_s) )^2  \mathrm{d}s \right|^{r} \right] \right\}^{1/2} \left\{ P\left( \sup_{s \leq {\Delta}} |W_s| > R \right)\right\}^{1/2}.
\end{multline*}
Thanks to the fact that $\mathcal{X}(K)$ is a locally uniformly equicontinuous family of functions, for a fixed $R$ the first term on the right-hand side of the last inequality can be made arbitrarily small uniformly in $b\in\mathcal{X}(K)$ by choosing $\delta$ small enough and  $|x-y|\leq\delta.$ Also $P\left( \sup_{s \leq {\Delta}} |W_s| > R \right)$ can be made arbitrarily small by choosing $R$ large enough. Finally,
\begin{multline*}
\ex\left[ \left| \int_0^{\Delta} (b(x+W_s) - b(y+W_s) )^2  \mathrm{d}s \right|^{r} \right]\\
\leq {\Delta}^{r/q}  \ex\left[ \int_0^{\Delta} |b(x+W_s) - b(y+W_s) |^{2r} \mathrm{d}s \right].
\end{multline*}
The expectation on the right-hand side is bounded by a universal constant independent of particular $x,y\in F$ and $b\in\mathcal{X}(K).$ This can be seen by a simple, but lengthy computation employing the Fubini theorem, the linear growth condition on $b,$ the $c_{2r}$-inequality and the fact that $W$ has moments of all orders. This completes bounding $\ex[|S_3|^r],$ which hence can be made arbitrarily small uniformly in $b\in\mathcal{X}(K),$ once $|x-y|\leq\delta$ for $\delta>0$ small enough. The term $\ex[|S_4|^r]$ can be bounded using similar computations by employing the inequality
\begin{equation*}
|b^2(u)-b^2(v)| \leq K(2+|u|+|v|) |b(u)-b(v)|
\end{equation*}
and the Cauchy-Schwarz inequality (twice), yielding together
\begin{multline*}
\ex\left[ \left| \int_0^{\Delta} ( b^2(x+W_s)-b^2(y+W_s))\mathrm{d}s \right|^r \right]\\
\leq \left\{ \ex\left[ \left| \int_0^{\Delta} ( b(x+W_s)-b(y+W_s))^2\mathrm{d}s \right|^r \right] \right\}^{1/2}\\
\times \left\{ \ex\left[ \left| \int_0^{\Delta} ( b(x+W_s)+b(y+W_s))^2\mathrm{d}s \right|^r \right] \right\}^{1/2}.
\end{multline*}
The first factor on the right-hand side can be made arbitrarily small uniformly in $b\in\mathcal{X}(K)$ by taking $\delta$ small (cf.\ above), while the second factor remains bounded uniformly in $b\in\mathcal{X}(K)$ and $x,y,\in F.$ To complete bounding $S_1,$ we need to bound the right-hand side of the inequality
\begin{equation*}
\ex[|L_x+L_y|^{q}] \leq c_{q} \ex[L_x^{q}] + c_{q} \ex[L_y^{q}].
\end{equation*}
Since obviously both terms on the right-hand side can be bounded in exactly the same manner, we will only give an argument for one of them. By the Cauchy-Schwarz inequality applied to two random variables
\begin{gather*}
\exp\left( \left( q^2 - \frac{q}{2} \right) \int_0^{\Delta} b^2 (x+W_s)\mathrm{d}s \right),\\
\quad \exp\left( \int_0^{\Delta} q b (x+W_s)dW_s -  \int_0^{\Delta} q^2 b^2 (x+W_s)\mathrm{d}s \right),
\end{gather*}
we have
\begin{equation}
\label{Lu}
\ex[L_x^q] \leq \left\{ \ex\left[ \exp\left( 2 \left( q^2 - \frac{q}{2} \right) \int_0^{\Delta} b^2 (x+W_s)\mathrm{d}s \right) \right]\right\}^{1/2}.
\end{equation}
Here we used the fact that
\begin{equation}
\label{expmartingale}
\ex\left[ \exp\left( \int_0^{\Delta} 2 q b(x+W_s)dW_s - \frac{1}{2} \int_0^{\Delta} 4q^2 b^2 (x+W_s)\mathrm{d}s \right) \right]=1,
\end{equation}
since the process under the expectation sign is a martingale and has expectation equal to one (this is due to the linear growth condition and Corollary 5.16 on p.\ 200 in \cite{karatzas}). Hence it remains to bound the right-hand side of \eqref{Lu}, which we denote by $S_5.$ 
By the linear growth condition we have
\begin{equation*}
S_5^2 \leq \exp\left( 2 \widetilde{q}K^2{\Delta}(1+|x|)^2 \right) \ex\left[ \exp\left( 2 \widetilde{q} K^2 \int_0^{\Delta} W^2_s \mathrm{d}s \right) \right].
\end{equation*}
Showing finiteness of the expectation on the right-hand side is standard: by Doob's maximal inequality for submartingales (see Theorem 3.8 (iv) pp.\ 13--14 in \cite{karatzas}; that the exponential on the right-hand side of the first line of the displayed formula below is a submartingale follows from Problem 3.7 on p.\ 13 in \cite{karatzas}),
\begin{align*}
\ex\left[ \exp\left( 2 \widetilde{q} K^2 \int_0^{\Delta} W^2_s\mathrm{d}s \right) \right] & \leq\ex \left[ \sup_{s \leq {\Delta}} \exp \left( 2 \widetilde{q} K^2 {\Delta} W_s^2 \right) \right]\\
&\leq 4 \ex \left[ \exp \left( 2 \widetilde{q} K^2 {\Delta} W_{\Delta}^2 \right) \right]<\infty.
\end{align*}
Here in the last inequality we used \eqref{kd}.

A conclusion that follows from the above bounds is that $S_1$ can be made arbitrarily small as soon as $|x-y|\leq\delta$ for small enough $\delta.$ The bound on $S_1$ will be true uniformly in $b\in\mathcal{X}(K).$

In order to bound $S_2,$ we again use H\"older's inequality to get
\begin{equation*}
S_2 \leq \{\ex[L_y^{q}]\}^{1/q} \{ \ex[|f(x+W_{\Delta})-f(y+W_{\Delta})|^r] \}^{1/r}.
\end{equation*}
The first factor on the right-hand side can be bounded as above. The second factor can be made arbitrarily small as soon as $|x-y|\leq\delta$ for small enough $\delta.$ Indeed, for a constant $R>0$ write
\begin{align*}
\ex[|f(x+W_{\Delta})-f(y+W_{\Delta})|^r]&=\ex[|f(x+W_{\Delta})-f(y+W_{\Delta})|^r 1_{[|W_{\Delta}|>R]}]\\
&+\ex[|f(x+W_{\Delta})-f(y+W_{\Delta})|^r 1_{[ |W_{\Delta}| \leq R]}]\\
&\leq (2\|f\|_{\infty})^r P(|W_{\Delta}|>R)\\
&+\ex[|f(x+W_{\Delta})-f(y+W_{\Delta})|^r 1_{[ |W_{\Delta}| \leq R]}].
\end{align*}
It is obvious that the first term on the right-hand side of the last inequality can be made arbitrarily small by selecting $R$ large enough. However, so can be the second one upon fixing $R$ by taking $|x-y|\leq \delta$ for small enough $\delta>0,$ since the function $f$ is uniformly continuous on compacts. This completes the proof.
\end{proof}

\section{}
\label{appendixB}

\begin{appxlem}
\label{lemma3.1b} Let $b,\widetilde{b}\in\widetilde{\mathcal{X}}(K_1,K_2).$ Fix $t>0.$ If $b\neq\widetilde{b},$ then $P_t^{b} \neq P_t^{\widetilde{b}}.$
\end{appxlem}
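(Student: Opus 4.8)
The plan is to show that the single transition operator $P_t^b$ already pins down the invariant distribution $\mu_b$, and that, because of the gradient structure built into Definition \ref{drift_class_md}, $\mu_b$ in turn pins down $b$; this reduces the statement to an identifiability result for invariant probability measures of $P_t^b$. So I would argue by contradiction, assuming $P_t^b=P_t^{\widetilde b}=:P$ but $b\neq\widetilde b$, and I would first record two facts from Remark \ref{rem_drift_md}: (i) $\mu_b$ and $\mu_{\widetilde b}$, being invariant for their respective semigroups, are both invariant probability measures for the single Markov kernel $P$, and they possess the strictly positive continuous densities $\pi_b=C_b^{-1}e^{-2V_b}$ and $\pi_{\widetilde b}=C_{\widetilde b}^{-1}e^{-2V_{\widetilde b}}$; (ii) $P=P_t^b$ has a strictly positive transition density $p_b(t,\cdot,\cdot)$, so that any invariant probability measure $\rho$ of $P$ satisfies $\rho(A)=\int_A\big(\int_{\mathbb{R}^d}p_b(t,x,y)\,\rho(\mathrm{d}x)\big)\,\mathrm{d}y$ for every Borel $A$ and is therefore absolutely continuous with respect to Lebesgue measure with an everywhere strictly positive density.

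The step I expect to be the main obstacle is establishing that $P$ admits a \emph{unique} invariant probability measure. I would prove this directly: given two invariant probability measures $\mu_1,\mu_2$ of $P$, set $a=(\mu_1\wedge\mu_2)(\mathbb{R}^d)$, so that $a=1$ precisely when $\mu_1=\mu_2$. Since $\mu_1\wedge\mu_2\leq\mu_i$ and $P$ is positive and preserves total mass, $(\mu_1\wedge\mu_2)P\leq\mu_iP=\mu_i$ for $i=1,2$, hence $(\mu_1\wedge\mu_2)P\leq\mu_1\wedge\mu_2$, and as both sides have the same total mass $a$ one gets $(\mu_1\wedge\mu_2)P=\mu_1\wedge\mu_2$. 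If $a\in(0,1)$ I would use the standard decomposition $\mu_i=a\nu+(1-a)\rho_i$ with $\nu=(\mu_1\wedge\mu_2)/a$ and $\rho_1\perp\rho_2$ probability measures, deduce that $\nu$, and then $\rho_i=(1-a)^{-1}(\mu_i-a\nu)$, are $P$-invariant, and reach a contradiction because by (ii) each $\rho_i$ has an everywhere strictly positive Lebesgue density and two such measures cannot be mutually singular (the case $a=0$ gives the same contradiction at once). Hence $a=1$ and $\mu_1=\mu_2$; in particular $\mu_b=\mu_{\widetilde b}$, so $\pi_b=\pi_{\widetilde b}$. An alternative here would be to invoke standard theory of $\psi$-irreducible aperiodic Markov chains, positivity of $p_b(t,\cdot,\cdot)$ making $P$ irreducible with respect to Lebesgue measure; but I would prefer the self-contained argument above.

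It then remains to recover the drift from the invariant density, which is where the gradient structure of Definition \ref{drift_class_md} is essential: from $C_b^{-1}e^{-2V_b(x)}=C_{\widetilde b}^{-1}e^{-2V_{\widetilde b}(x)}$ for all $x\in\mathbb{R}^d$ one gets $V_b-V_{\widetilde b}\equiv\frac{1}{2}\log(C_b/C_{\widetilde b})$, a constant, and since $V_b,V_{\widetilde b}\in\mathcal{C}^3$ differentiation yields $\nabla V_b=\nabla V_{\widetilde b}$, whence $b=-[\nabla V_b]^{tr}=-[\nabla V_{\widetilde b}]^{tr}=\widetilde b$, contradicting $b\neq\widetilde b$. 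This mirrors the role of Lemma 3.1 in \cite{meulen} in the one-dimensional case, where knowing the invariant density (proportional to $e^{2\int_0^\cdot b}$) likewise determines $b$ up to Lebesgue-null sets.
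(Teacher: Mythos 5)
Your argument is correct, and it reaches the paper's conclusion through the same overall reduction (the common transition operator pins down the invariant density, and the gradient structure $b=-[\nabla V_b]^{tr}$ then pins down $b$), but the mechanism you use for the central identification step is genuinely different. The paper assumes $P_t^b=P_t^{\widetilde b}$, iterates the common operator, and invokes ergodicity in the form of the limit $(P_t^b)^m f(x)\to\int_{\mathbb{R}^d} f\,\mathrm{d}\mu_b$ as $m\to\infty$, which immediately forces $\int f\,\mathrm{d}\mu_b=\int f\,\mathrm{d}\mu_{\widetilde b}$ for all bounded $f$ and hence $\pi_b=\pi_{\widetilde b}$ a.e.; you instead prove, self-containedly, that a Markov kernel with everywhere strictly positive transition density admits at most one invariant probability measure, via the $\mu_1\wedge\mu_2$ decomposition and the observation that two mutually singular measures cannot both have everywhere positive Lebesgue densities. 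Your route costs a bit more measure theory but needs strictly less input: it uses only invariance of $\mu_b,\mu_{\widetilde b}$ and positivity of $p_b(\Delta,\cdot,\cdot)$ (both supplied by Remark \ref{rem_drift_md}), with no appeal to a convergence-to-equilibrium statement, whereas the paper's one-line use of ergodic convergence is shorter given that ergodicity is already on the table and mirrors Lemma 3.1 of van der Meulen and van Zanten. Your closing step (equality of the continuous densities forces $V_b-V_{\widetilde b}$ constant, hence $b=\widetilde b$) is just the contrapositive of the paper's opening observation that $b\neq\widetilde b$ entails $\pi_b\neq\pi_{\widetilde b}$ on an open ball, so the two proofs agree there.
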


\begin{proof}
The proof is similar to the proof of Lemma 3.1 in \cite{meulen}. By continuity of $b$ and $\widetilde{b}$ we have that if $b\neq\widetilde{b},$ this in fact holds on a set of positive Lebesgue measure. Then also $V_b\neq V_{\widetilde{b}}$ on a set of positive Lebesgue measure and therefore $\pi_b\neq \pi_{\widetilde{b}}$ on a set of positive Lebesgue measure, for instance some open ball in $\mathbb{R}^d.$ Now assume that $P_{t}^b=P_{t}^{\widetilde{b}}.$ Then for any bounded measurable function $f$ and any positive integer $m,$  by the semigroup property of $P_{t}^b$ we have that
\begin{equation*}
\ex_x^b[f(X_{mt})]=(P_t^b)^m f(x)=(P_t^{\widetilde{b}})^m f(x)=\ex_x^{\widetilde{b}}[f(X_{mt})].
\end{equation*}
Letting $m\rightarrow\infty,$ the above display and ergodicity give that
\begin{equation*}
\int_{\mathbb{R}^d}f(y)\pi_b(y)\mathrm{d}y=\int_{\mathbb{R}^d}f(y)\pi_{\widetilde{b}}(y)\mathrm{d}y.
\end{equation*}
Hence $\pi_b=\pi_{\widetilde{b}}$ a.e., and in fact by continuity $\pi_b=\pi_{\widetilde{b}}$ everywhere. This is a contradiction and thus $b\neq\widetilde{b}$ implies $P_{t}^b\neq P_{t}^{\widetilde{b}}.$
\end{proof}

\begin{appxlem}
\label{lemma5.3b}
Fix $\varepsilon>0$ such that $\varepsilon\leq 2\nu(\mathbb{R}),$ take a fixed $f\in\mathcal{C}_{bdd}(\mathbb{R}^d)$ such that $\|f\|_{\infty}\leq 1,$ and write
\begin{equation*}
B=\{ b\in\mathcal{X}:\| P_{\Delta}^b f - P_{\Delta}^{b_0} f \|_{1,\nu} > \varepsilon \}.
\end{equation*}
Then there exist a compact set $F\subset\mathbb{R}^d,$ an integer $N>0$ and cubes $I_1,\ldots,I_N$ covering $F,$ such that
\begin{equation*}
B\subset \left(\bigcup_{j=1}^N B_j^+\right) \bigcup\left( \bigcup_{j=1}^N B_j^-\right),
\end{equation*}
where
\begin{align*}
B_j^+&= \left\{ b\in B : P_{\Delta}^b f(x) - P_{\Delta}^{b_0} f(x) > \frac{\varepsilon}{4\nu(F)},\forall x\in I_j \right\} ,\\
B_j^-&= \left\{ b\in B : P_{\Delta}^b f(x) - P_{\Delta}^{b_0} f(x) < -\frac{\varepsilon}{4\nu(F)},\forall x\in I_j \right\}.
\end{align*}
\end{appxlem}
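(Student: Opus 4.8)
The plan is to mimic the argument for the one-dimensional Lemma \ref{lemma5.3} (which in turn adapts Lemma 5.3 in \cite{meulen}), replacing intervals by cubes and using the multidimensional local uniform equicontinuity of the family $\{P_\Delta^b f: b\in\mathcal{X}(K_1,K_2)\}$. First I would record the analogue of Lemma \ref{lemma6.1} in the multidimensional setting: for every fixed $f\in\mathcal{C}_{bdd}(\mathbb{R}^d)$, the family $\{P_\Delta^b f:b\in\mathcal{X}(K_1,K_2)\}$ is locally uniformly equicontinuous. This is proved by the same Girsanov-plus-Burkholder--Davis--Gundy scheme as in Appendix \ref{appendix}, with the linear growth bound in Definition \ref{drift_class_md}(c) and the uniform Lipschitz bound on the $b_i$ playing the role of local uniform equicontinuity of $\mathcal{X}(K)$; I will cite this as the required input. (If the paper provides such a lemma in Appendix \ref{appendixB}, I would simply invoke it.)

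Next, the truncation step. Since $f$ is bounded by $1$ and $\nu$ is a finite Borel measure, choose a compact set $F\subset\mathbb{R}^d$ — say a large closed ball or cube — with $\nu(F^c)<\varepsilon/4$. Then for any $b\in B$,
\begin{equation*}
\int_{F}|P_\Delta^b f(x)-P_\Delta^{b_0}f(x)|\,\nu(\mathrm{d}x)
\;\geq\; \|P_\Delta^b f-P_\Delta^{b_0}f\|_{1,\nu}-2\nu(F^c)\;>\;\varepsilon-\frac{\varepsilon}{2}\;=\;\frac{\varepsilon}{2},
\end{equation*}
so the "defect" $g_b:=P_\Delta^b f-P_\Delta^{b_0}f$ has $\int_F |g_b|\,\mathrm{d}\nu>\varepsilon/2$, hence at some point $x\in F$ we have $|g_b(x)|>\varepsilon/(2\nu(F))$.

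Now I would use equicontinuity to upgrade this single point to a whole cube. By the multidimensional Lemma \ref{lemma6.1} applied with $F$, there is a $\delta>0$ such that $|g_b(x)-g_b(y)|<\varepsilon/(4\nu(F))$ whenever $x,y\in F$ and $|x-y|<\delta$, \emph{uniformly in} $b\in\mathcal{X}(K_1,K_2)$ (the bound on $g_b$ follows since both $P_\Delta^b f$ and $P_\Delta^{b_0}f$ are handled by the same equicontinuity statement). Cover $F$ by finitely many cubes $I_1,\ldots,I_N$, each of diameter less than $\delta$ (possible since $F$ is compact). If $b\in B$, pick $x_b\in F$ with $|g_b(x_b)|>\varepsilon/(2\nu(F))$ and let $I_{j}$ be a cube containing $x_b$; then for every $y\in I_j$, $|g_b(y)|\geq |g_b(x_b)|-|g_b(x_b)-g_b(y)|>\varepsilon/(2\nu(F))-\varepsilon/(4\nu(F))=\varepsilon/(4\nu(F))$, with the sign of $g_b$ constant on $I_j$ (again by the equicontinuity estimate, since the oscillation of $g_b$ on $I_j$ is smaller than the absolute value of $g_b$ there). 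Hence $b\in B_j^+$ or $b\in B_j^-$, which gives the claimed inclusion $B\subset\bigcup_{j=1}^N(B_j^+\cup B_j^-)$.

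The only genuinely new ingredient compared with the one-dimensional case is the multidimensional local uniform equicontinuity of $\{P_\Delta^b f\}$, and that is the step I expect to be the main obstacle: the Girsanov-representation argument behind Lemma \ref{lemma6.1} must be carried out with $d$-dimensional Brownian motion, using Definition \ref{drift_class_md}(c) (linear growth and the uniform bound on the partial derivatives, which yields a uniform Lipschitz constant for each coordinate $b_i$) in place of the scalar linear growth plus local uniform equicontinuity, and checking that the exponential moments of $L_x$ remain finite for $K_1\Delta$ small. Everything else — the truncation bound, the compactness covering, the sign/oscillation bookkeeping — is routine and identical in form to the scalar case.
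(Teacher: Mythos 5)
Your proposal is correct and follows essentially the same route as the paper: its proof of Lemma \ref{lemma5.3b} simply carries over the argument of Lemma 5.3 in \cite{meulen}, replacing the intervals there by cubes and invoking the multidimensional equicontinuity result Lemma \ref{lemma6.1b} in place of Lemma A.1 of \cite{meulen} --- exactly the truncation-plus-finite-covering scheme you spell out. The ingredient you flagged as the main obstacle, local uniform equicontinuity of $\{P_{\Delta}^b f : b\in\mathcal{X}(K_1,K_2)\}$, is indeed supplied by the paper as Lemma \ref{lemma6.1b}.
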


\begin{proof}
The proof of Lemma 5.3 in \cite{meulen} carries over, provided one redefines the intervals $I_j$ of length $\delta/2>0$ from that proof to be cubes with sides of length $\delta/2,$ and uses instead of Lemma A.1 from \cite{meulen} Lemma \ref{lemma6.1b} below.
\end{proof}

\begin{appxlem}
\label{lemma6.1b}
For a fixed $f\in\mathcal{C}_{bdd}(\mathbb{R}^d)$ and $t>0,$ the family $\{P_t^bf:b\in {\mathcal{X}(K_1,K_2)} \}$ is a locally uniformly equicontinuous family of functions.
\end{appxlem}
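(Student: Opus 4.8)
The plan is to follow the proof of Lemma \ref{lemma6.1}, its one-dimensional counterpart, almost line by line, while exploiting that the class $\mathcal{X}(K_1,K_2)$ is even more regular than $\mathcal{X}(K)$: the bound $|\partial b_i/\partial x_j|\le K_2$ in Definition \ref{drift_class_md}(c) together with the mean value theorem (cf.\ the computation in Example \ref{exam_prior2}) makes every $b\in\mathcal{X}(K_1,K_2)$ globally Lipschitz with constant $\sqrt d\,K_2$, uniformly in $b$; in particular $\mathcal{X}(K_1,K_2)$ is locally uniformly equicontinuous in the sense of Definition \ref{equicontinuity}. As in the scalar case, the semigroup identity
\begin{equation*}
|P_\Delta^b f(x)-P_\Delta^b f(y)|\le P_{\Delta/2}^b|P_{\Delta/2}^b f(x)-P_{\Delta/2}^b f(y)|
\end{equation*}
reduces the claim to proving uniform equicontinuity of $\{P_\Delta^b f:b\in\mathcal{X}(K_1,K_2)\}$ on an arbitrary compact $F\subset\mathbb{R}^d$ for $\Delta$ small, say with $K_1\Delta<1/2$.

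Next I would invoke the $d$-dimensional Girsanov theorem --- legitimate because the linear growth condition in Definition \ref{drift_class_md}(c) and the multidimensional analogue of Corollary 5.16 in \cite{karatzas} ensure the relevant exponential is a genuine martingale --- to write $P_\Delta^b f(x)=\ex[f(x+W_\Delta)L_x]$, where $W$ is a standard $d$-dimensional Brownian motion and
\begin{equation*}
l_x=\int_0^\Delta b(x+W_s)\cdot dW_s-\frac12\int_0^\Delta\|b(x+W_s)\|^2\,\mathrm{d}s,\qquad L_x=e^{l_x}.
\end{equation*}
Splitting $|P_\Delta^b f(x)-P_\Delta^b f(y)|\le S_1+S_2$ exactly as in Lemma \ref{lemma6.1}, with $S_1=\ex[|f(x+W_\Delta)|\,|L_x-L_y|]$ and $S_2=\ex[L_y\,|f(x+W_\Delta)-f(y+W_\Delta)|]$, I would bound the two pieces separately. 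For $S_1$ one uses $|e^a-e^b|\le|a-b|\,|e^a+e^b|$ and H\"older's inequality with the same exponents $r,q$ as in the one-dimensional argument; the increment $l_x-l_y$ splits into a stochastic integral and a bounded-variation term, controlled via the Burkholder-Davis-Gundy inequality, the Cauchy-Schwarz inequality, the uniform Lipschitz property, the linear growth bound, and the elementary estimate $\big|\,\|b(u)\|^2-\|b(v)\|^2\,\big|\le(\|b(u)\|+\|b(v)\|)\,\|b(u)-b(v)\|\le K_1(2+\|u\|+\|v\|)\,\|b(u)-b(v)\|$, the $\mathbb{R}^d$ substitute for the inequality on $b^2$ used in the scalar proof.

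The main obstacle, just as in the scalar case, is the uniform control of the exponential moments $\ex[L_x^q]$ over $b\in\mathcal{X}(K_1,K_2)$ and $x\in F$, since $\int_0^\Delta\|b(x+W_s)\|^2\,\mathrm{d}s$ is unbounded. I would treat this by a Cauchy-Schwarz argument that peels off a mean-one exponential martingale (again using the linear growth condition and the multidimensional version of Corollary 5.16 in \cite{karatzas}), leaving $\ex\big[\exp\big(c\int_0^\Delta\|W_s\|^2\,\mathrm{d}s\big)\big]$ to be bounded for a constant $c=c(K_1,q,\Delta)$; writing $\|W_s\|^2=\sum_{i=1}^d(W_s^{(i)})^2$ and noting that $\exp(c\Delta\|W_s\|^2)=\prod_{i=1}^d\exp(c\Delta(W_s^{(i)})^2)$ is a submartingale, Doob's maximal inequality bounds this by a constant times $\ex[\exp(c\Delta\|W_\Delta\|^2)]$, which is finite once $\Delta$ is small enough --- precisely where the restriction analogous to \eqref{kd} enters, now with $K_1$ in place of $K$, and it is what forces the preliminary reduction to small $\Delta$. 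Finally $S_2$ is dispatched by H\"older, the bound on $\ex[L_y^q]$ just obtained, and the uniform continuity of $f$ on the compact set $\{u+v:u\in F,\ \|v\|\le R\}$ together with the smallness of $P(\|W_\Delta\|>R)$ for large $R$, exactly as in Lemma \ref{lemma6.1}. All the resulting bounds are uniform in $b\in\mathcal{X}(K_1,K_2)$, which is what the statement requires.
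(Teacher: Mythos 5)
Your proposal is correct and follows essentially the same route as the paper's own proof: the Girsanov representation $P_{\Delta}^b f(x)=\ex[f(x+W_{\Delta})L_x]$, the split into $S_1+S_2$, H\"older/Burkholder-Davis-Gundy/Cauchy--Schwarz estimates exploiting the uniform Lipschitz bound $\sqrt{d}K_2$ and the linear growth condition, the Cauchy--Schwarz peeling of a mean-one exponential martingale to control $\ex[L_x^q]$, and Doob's maximal inequality together with independence of the coordinates of $W$ to bound $\ex[\exp(c\int_0^{\Delta}\|W_s\|^2\mathrm{d}s)]$. The only cosmetic differences are that the paper treats the coordinates $b_i$ separately via the $c_r$-inequality where you use vector notation, and the smallness threshold on $\Delta$ carries an extra factor $\sqrt{d}$ (cf.\ \eqref{kd_b}), which your preliminary semigroup reduction to small $\Delta$ absorbs in any case.
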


\begin{proof}
Let
\begin{equation*}
l_u=\sum_{i=1}^d\int_0^{\Delta} b_i (u+W_s)\mathrm{d}W_{i,s}-\frac{1}{2}\sum_{i=1}^d\int_0^{\Delta} b^2_i(u+W_{s})\mathrm{d}s, \quad L_u=e^{l_u}
\end{equation*}
for a standard $d$-dimensional Brownian motion $W=(W_1,\ldots,W_d).$ Then, employing the Girsanov theorem, as in the proof of Lemma A.1 in \cite{meulen}, see also the proof of Lemma \ref{lemma6.1}, it can be shown that
\begin{equation*}
P_{\Delta}^bf(x)=\ex [f(x+W_{\Delta})L_x],
\end{equation*}
where the expectation is evaluated under the Wiener measure. From this point on  the proof is a generalisation of the arguments in the proof of Lemma \ref{lemma6.1} from Appendix \ref{appendix} to the multidimensional setting. In particular, as in that proof, it is enough to prove the lemma for $\Delta$ such that
\begin{equation*}
\Delta K_1<\frac{1}{2\sqrt{d}}.
\end{equation*}
In order to prove the lemma, we need to show that the family of functions $\{P_t^bf:b\in {\mathcal{X}(K_1,K_2)} \}$ is uniformly equicontinuous whenever the argument $x$ of $P_t^bf(x)$ is restricted to an arbitray compact set $F.$  Fix a compact set $F\subset\mathbb{R}^d.$ Throughout this proof we assume $x,y\in F.$ We have
\begin{align*}
| P_{\Delta}^bf(x) - P_{\Delta}^bf(y) | & \leq \ex[|f(x+W_{\Delta})||L_x-L_y|]+\ex[ L_y|f(x+W_{\Delta})-f(y+W_{\Delta})| ]\\
&:=S_1+S_2.
\end{align*}
We will bound the two terms $S_1$ and $S_2$ separately. There exists $\widetilde{q}>1,$ such that
\begin{equation}
\label{kd_b}
K_1\Delta<\frac{1}{2\sqrt{d\widetilde{q}}}.
\end{equation}
Fix such $\widetilde{q}$ and let $q$ be determined as that root of the equation
\begin{equation}
\label{qtilde_b}
\widetilde{q}=2 \left( q^2 - \frac{q}{2} \right)
\end{equation}
that is larger than $1.$ Next set $r=q/(q-1).$ We first bound $S_1.$ As in the proof of Lemma \ref{lemma6.1} from Appendix \ref{appendix}, we have that
\begin{equation*}
S_1 \leq \|f\|_{\infty} \{\ex[|l_x-l_y|^r]\}^{1/r}\{\ex[|L_x+L_y|^{q}]\}^{1/q}.
\end{equation*}
The $c_r$-inequality gives that in order to bound $ \{\ex[|l_x-l_y|^r]\}^{1/r}, $ it is enough to bound the terms
\begin{gather*}
\ex\left[ \left| \int_0^{\Delta} ( b_i(x+W_s) - b_i(y+W_s) )\mathrm{d}W_{i,s} \right|^{r} \right], \\
\ex\left[ \left| \int_0^{\Delta} ( b_i^2(x+W_s) - b_i^2(y+W_s) )\mathrm{d}s \right|^{r} \right]
\end{gather*}
for $i=1,\ldots,d.$ Since the arguments are the same for any $i,$ we henceforth fix a particular $i.$ As in Lemma \ref{lemma6.1}, the Burkholder-Davis-Gundy inequality gives
\begin{multline*}
\ex\left[ \left| \int_0^{\Delta} (b_i(x+W_s) - b_i(y+W_s) )\mathrm{d}W_{i,s} \right|^r \right] \\
\leq C_r \ex\left[ \left| \int_0^{\Delta} (b_i(x+W_s) - b_i(y+W_s) )^2 \mathrm{d}s \right|^{r/2} \right],
\end{multline*}
where $C_r>0$ is a universal constant independent of $b.$ For a constant $R>0$ and the set $F^{\prime}=\{ u+v:u\in F,\|v\|\leq R\}$ by the Cauchy-Schwarz inequality the expectation on the right-hand side of the above display can be bounded as follows:
\begin{multline*}
\ex\left[ \left| \int_0^{\Delta} (b_i(x+W_s) - b_i(y+W_s) )^2  \mathrm{d}s\right|^{r/2} 1_{ [\sup_{s \leq {\Delta}} \|W_s\| \leq R ] } \right]\\
+\ex\left[ \left| \int_0^{\Delta} (b_i(x+W_s) - b_i(y+W_s) )^2  \mathrm{d}s\right|^{r/2} 1_{ [\sup_{s \leq {\Delta}} \|W_s\| > R ] } \right]\\
\leq {\Delta}^{r/2} \sup_{ \substack{ u,v\in F^{\prime} \\ \|u-v\| \leq \|x-y\| } } |b_i(u)-b_i(v)|^r\\
+\left\{ \ex\left[ \left| \int_0^{\Delta} (b_i(x+W_s) - b_i(y+W_s) )^2  \mathrm{d}s \right|^{r} \right] \right\}^{1/2} \left\{ P\left( \sup_{s \leq {\Delta}} \|W_s\| > R \right)\right\}^{1/2}.
\end{multline*}
Since $b$ has partial derivatives bounded in absolute value by $K_2,$ the first term on the right-hand side of the above display can be made arbitrarily small by choosing $\delta$ small enough and $\|x-y\|\leq \delta.$ Furthermore, the term
\begin{equation*}
\left\{ P\left( \sup_{s \leq {\Delta}} \|W_s\| > R \right)\right\}^{1/2}
\end{equation*}
can be made arbitrarily small by choosing $R$ large enough. A lengthy, but easy computation shows that the term
\begin{equation*}
\left\{ \ex\left[ \left| \int_0^{\Delta} (b_i(x+W_s) - b_i(y+W_s) )^2  \mathrm{d}s \right|^{r} \right] \right\}^{1/2}
\end{equation*}
is bounded by a constant independent of $b;$ cf.\ the arguments in the proof of Lemma \ref{lemma6.1} from Appendix \ref{appendix}. Consequently, the term
\begin{equation*}
\ex\left[ \left| \int_0^{\Delta} ( b_i(x+W_s) - b_i(y+W_s) )\mathrm{d}W_{i,s} \right|^{r} \right]
\end{equation*}
can be made arbitrarily small, once $\delta$ is chosen small enough and $\|x-y\|\leq\delta.$ The term
\begin{equation*}
\ex\left[ \left| \int_0^{\Delta} ( b_i^2(x+W_s) - b_i^2(y+W_s) )\mathrm{d}s \right|^{r} \right]
\end{equation*}
can be shown to be bounded uniformly in $b\in\mathcal{X}(K_1,K_2)$ by employing similar techniques; cf.\ the proof of Lemma \ref{lemma6.1} from Appendix \ref{appendix}. Next we need to bound the right-hand side of the inequality
\begin{equation*}
\ex[|L_x+L_y|^{q}] \leq c_{q} \ex[L_x^{q}] + c_{q} \ex[L_y^{q}].
\end{equation*}
Since obviously both terms on the right-hand side can be bounded in exactly the same manner, we will only give an argument for the first one of them. By the Cauchy-Schwarz inequality applied to the random variables
\begin{gather*}
\exp\left( \left( q^2 - \frac{q}{2} \right) \sum_{i=1}^d \int_0^{\Delta} b_i^2 (x+W_s)\mathrm{d}s \right),\\
\quad \exp\left( \sum_{i=1}^d \int_0^{\Delta} q b_i (x+W_s)\mathrm{d}W_{i,s} - \sum_{i=1}^d  \int_0^{\Delta} q^2 b_i^2 (x+W_s)\mathrm{d}s \right),
\end{gather*}
as in the proof of Lemma \ref{lemma6.1} from Appendix \ref{appendix} we have
\begin{equation}
\label{Lu_b}
\ex[L_x^q] \leq \left\{ \ex\left[ \exp\left( 2 \left( q^2 - \frac{q}{2} \right)  \sum_{i=1}^d \int_0^{\Delta} b_i^2 (x+W_s)\mathrm{d}s \right) \right]\right\}^{1/2}.
\end{equation}
Hence it remains to bound the right-hand side of the above display, which we denote by $S_5.$ 
By the linear growth condition we have
\begin{equation*}
S_5^2 \leq \exp\left( 2 d \widetilde{q}K_1^2{\Delta} (1+\|x\|)^2 \right) \ex\left[ \exp\left( 2 d \widetilde{q} K_1^2 \int_0^{\Delta} \|W_s\|^2 \mathrm{d}s \right) \right].
\end{equation*}
By Doob's maximal inequality for submartingales and independence of scalar Brownian motions $W_i$'s,
\begin{equation*}
\ex\left[ \exp\left( 2 d \widetilde{q} K_1^2 \int_0^{\Delta} \|W_s\|^2\mathrm{d}s \right) \right] \leq 4 \prod_{i=1}^d \ex \left[ \exp \left( 2 d \widetilde{q} K_1^2 {\Delta} W_{i,\Delta}^2 \right) \right]<\infty.
\end{equation*}
Here in the last inequality we used \eqref{kd_b}. The conclusion is that the term $S_1$ can be made arbitrarily small by taking $\delta$ small and $\|x-y\|\leq\delta.$ The proof is now completed as in the case of Lemma \ref{lemma6.1} from Appendix \ref{appendix}: by H\"older's inequality
\begin{equation*}
S_2 \leq \{\ex[L_y^{q}]\}^{1/q} \{ \ex[|f(x+W_{\Delta})-f(y+W_{\Delta})|^r] \}^{1/r}.
\end{equation*}
The first factor on the right-hand side can be bounded as above uniformly in $b\in\mathcal{X}(K_1,K_2).$ The second factor can be made arbitrarily small as soon as $\|x-y\|\leq\delta$ for small enough $\delta$: for a constant $R>0,$
\begin{align*}
\ex[|f(x+W_{\Delta})-f(y+W_{\Delta})|^r]&=\ex[|f(x+W_{\Delta})-f(y+W_{\Delta})|^r 1_{[\|W_{\Delta}\|>R]}]\\
&+\ex[|f(x+W_{\Delta})-f(y+W_{\Delta})|^r 1_{[ \|W_{\Delta}\| \leq R]}]\\
&\leq (2\|f\|_{\infty})^r P(\|W_{\Delta}\|>R)\\
&+\ex[|f(x+W_{\Delta})-f(y+W_{\Delta})|^r 1_{[ \|W_{\Delta}\| \leq R]}].
\end{align*}
The first term on the right-hand side of the last inequality can be made arbitrarily small by selecting $R$ large enough. Upon fixing $R,$ so can be the second one by taking $\|x-y\|\leq \delta$ for small enough $\delta>0.$ Combination of all the above intermediate results entails the statement of the lemma.
\end{proof}

\begin{appxlem}
\label{lemma5.1b}
Let
\begin{equation*}
{\rm{KL}}(b_0,b)=\int_{\mathbb{R}^d}\int_{\mathbb{R}^d} \pi_{b_0}(x) p_{b_0}(\Delta,x,y)\log\frac{p_{b_0}(\Delta,x,y)}{p_b(\Delta,x,y)}\mathrm{d}x\mathrm{d}y.
\end{equation*}
and assume that the weak solution to \eqref{sde} is initialised at $\mu_b.$ Then
for the prior $\Pi$ satisfying property \eqref{priorcondition_b}, the inequality
\begin{equation}
\label{KLproperty_b}
\Pi(b\in{\mathcal{X}}(K_1,K_2):{\rm{KL}}(b_0,b)<\varepsilon)>0, \quad \forall \varepsilon>0
\end{equation}
holds.
\end{appxlem}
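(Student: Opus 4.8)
The plan is to follow the proof of Lemma~\ref{lemma5.1} (that is, the proof of Lemma~5.1 in \cite{meulen}) and to supply only the changes needed for $d\ge 1$. The whole point is to dominate $\mathrm{KL}(b_0,b)$ by a constant multiple of $\sum_{i=1}^d\|b_i-b_{0,i}\|_{2,\mu_{b_0}}^2$; once this is established, \eqref{KLproperty_b} follows at once from \eqref{priorcondition_b}.

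First I would pass from the one-step transition kernels to the laws of the whole path on $[0,\Delta]$. Since under both $P_{b_0}$ and $P_b$ the solution is stationary with marginals $\mu_{b_0}$ and $\mu_b$, the pair $(X_0,X_\Delta)$ has joint density $\pi_{b_0}(x)p_{b_0}(\Delta,x,y)$, respectively $\pi_b(x)p_b(\Delta,x,y)$, so the Kullback--Leibler divergence between these joint laws equals $\mathrm{K}(\mu_{b_0},\mu_b)+\mathrm{KL}(b_0,b)$. Because $(X_0,X_\Delta)$ is a measurable functional of $\{X_t:t\in[0,\Delta]\}$, the data-processing inequality (this is the inequality used in the proof of Lemma~5.1 in \cite{meulen}) gives
\begin{equation*}
\mathrm{K}(\mu_{b_0},\mu_b)+\mathrm{KL}(b_0,b)\le \mathrm{K}(\mathcal{L}_1,\mathcal{L}_2),
\end{equation*}
with $\mathcal{L}_1,\mathcal{L}_2$ the path laws on $[0,\Delta]$ under $P_{b_0}$ and $P_b$. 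Since $\mathrm{K}(\mu_{b_0},\mu_b)<\infty$ by Remark~\ref{rem_drift_md}, it may be moved to the other side.

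Next I would evaluate $\mathrm{K}(\mathcal{L}_1,\mathcal{L}_2)$ by the Girsanov theorem, exactly as in \eqref{nomeri2} but with sums over the $d$ coordinates: the Radon--Nikodym derivative of $\mathcal{L}_1$ with respect to $\mathcal{L}_2$ equals $\pi_{b_0}(X_0)/\pi_b(X_0)$ times the exponential martingale built from the drift difference $b-b_0$, and on taking logarithms and integrating against $P_{b_0}$ the stochastic-integral term has vanishing expectation (the exponential local martingale is a genuine martingale, by the linear growth condition in Definition~\ref{drift_class_md}(c) together with Corollary~5.16 on p.~200 in \cite{karatzas}), while stationarity turns $\ex_{P_{b_0}}[\int_0^\Delta (b_i-b_{0,i})^2(X_s)\,\mathrm{d}s]$ into $\Delta\,\|b_i-b_{0,i}\|_{2,\mu_{b_0}}^2$. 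This yields
\begin{equation*}
\mathrm{K}(\mathcal{L}_1,\mathcal{L}_2)=\mathrm{K}(\mu_{b_0},\mu_b)+\frac{\Delta}{2}\sum_{i=1}^d\|b_i-b_{0,i}\|_{2,\mu_{b_0}}^2,
\end{equation*}
and combined with the previous display $\mathrm{KL}(b_0,b)\le \frac{\Delta}{2}\sum_{i=1}^d\|b_i-b_{0,i}\|_{2,\mu_{b_0}}^2$. Each term on the right is finite: by the linear growth bound in Definition~\ref{drift_class_md}(c) the integrand is dominated by $c(1+\|x\|)^2$, and $\mu_{b_0}$ has moments of all orders because $\pi_{b_0}=C_{b_0}^{-1}e^{-2V_{b_0}}$ decays at least exponentially by Definition~\ref{drift_class_md}(b); finiteness of $\mathrm{K}(\mu_{b_0},\mu_b)$ follows likewise from $\pi_b=C_b^{-1}e^{-2V_b}$ with $V_b$ of polynomial growth (cf.\ Remark~\ref{rem_drift_md}). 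It then only remains to note that for every $\varepsilon>0$
\begin{equation*}
\{b\in\mathcal{X}(K_1,K_2):\textstyle\sum_{i=1}^d\|b_i-b_{0,i}\|_{2,\mu_{b_0}}^2<2\varepsilon/\Delta\}\subset\{b\in\mathcal{X}(K_1,K_2):\mathrm{KL}(b_0,b)<\varepsilon\},
\end{equation*}
and the set on the left has positive $\Pi$-mass by \eqref{priorcondition_b}, so \eqref{KLproperty_b} holds.

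The step I expect to require the most care is the Girsanov computation: one must check that the exponential built from $b-b_0$ really does define a change of measure on $\mathcal{C}([0,\Delta];\mathbb{R}^d)$ and that the corresponding stochastic integral has vanishing $P_{b_0}$-expectation, which is precisely what the linear growth assumption buys; the remaining finiteness bookkeeping is the same as in Lemma~\ref{lemma5.1} and Remark~\ref{rem_drift_md} and is routine.
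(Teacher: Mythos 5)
Your proposal is correct and follows essentially the same route as the paper: the paper's proof simply declares the argument an obvious modification of Lemma~\ref{lemma5.1} (i.e.\ the data-processing bound ${\rm KL}(b_0,b)\le -{\rm K}(\mu_{b_0},\mu_b)+{\rm K}(\mathcal{L}_1,\mathcal{L}_2)$ combined with the Girsanov identity, now summed over the $d$ coordinates), plus the additional finiteness of ${\rm K}(\mu_{b_0},\mu_b)$, which is exactly what you spell out. The only cosmetic difference is that the paper obtains this finiteness by citing Proposition~1.1 in \cite{gobet01}, whereas you derive it directly from $\pi_b=C_b^{-1}e^{-2V_b}$, the polynomial growth of $V_b$ and the exponential decay forced by Definition~\ref{drift_class_md}(b), which is a perfectly adequate substitute.
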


\begin{proof}
The proof is an obvious modification of the proof of Lemma 5.1 in in \cite{meulen}; as in the proof of Lemma \ref{lemma5.1} in Appendix \ref{appendix}, we need to verify additionally that the Kullback-Leibler divergence $\operatorname{K}(\mu_{b},\mu_{\widetilde{b}})$ is finite for any $b,\widetilde{b}\in{\mathcal{X}}(K_1,K_2).$ This, however, follows from Proposition 1.1 in \cite{gobet01}.
\end{proof}

\begin{appxlem}
\label{lemma5.2b}
Suppose that the prior $\Pi$ on $\mathcal{X}(K_1,K_2)$ has the property \eqref{KLproperty_b} and assume that the weak solution to \eqref{sde} is initialised at $\mu_b.$ If for a sequence $C_n$ of measurable subsets of ${\mathcal{X}}(K_1,K_2)$ there exists a constant $c>0,$ such that
\begin{equation*}
e^{nc}\int_{C_n}L_n(b)\Pi(\mathrm{d}b)\rightarrow 0, \quad \text{$P_{{b}_0}$-a.s.},
\end{equation*}
then
\begin{equation*}
\Pi(C_n|X_0,\ldots,X_{\Delta n})\rightarrow 0, \quad \text{$P_{{b}_0}$-a.s.}
\end{equation*}
as $n\rightarrow\infty.$
\end{appxlem}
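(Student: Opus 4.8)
The plan is to follow the proof of Lemma \ref{lemma5.2} from Appendix \ref{appendix} (equivalently, the proof of Lemma 5.2 in \cite{meulen}), making the obvious changes needed to pass from $\mathbb{R}$ to $\mathbb{R}^d$; the only genuinely dimension-dependent ingredient, finiteness of the relevant Kullback--Leibler divergences, is already supplied by Lemma \ref{lemma5.1b} and Remark \ref{rem_drift_md}. First I would write the posterior as the ratio
\[
\Pi(C_n|X_0,\ldots,X_{\Delta n})=\frac{\int_{C_n}L_n(b)\,\Pi(\mathrm{d}b)}{\int_{\mathcal{X}(K_1,K_2)}L_n(b)\,\Pi(\mathrm{d}b)},
\]
so that it suffices to bound the numerator from above and the denominator from below. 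The numerator is handled directly by the hypothesis: $\int_{C_n}L_n(b)\,\Pi(\mathrm{d}b)=e^{-nc}a_n$ with $a_n\to 0$, $P_{b_0}$-a.s.

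The core of the argument is the lower bound on the denominator. Fix $\eta\in(0,c)$. By property \eqref{KLproperty_b}, the set $A_\eta=\{b\in\mathcal{X}(K_1,K_2):{\rm KL}(b_0,b)<\eta/2\}$ has $\Pi(A_\eta)>0$. Restricting the integral to $A_\eta$ and applying Jensen's inequality to the concave function $\log$ against the probability measure $\Pi(\mathrm{d}b)/\Pi(A_\eta)$ gives
\[
\frac{1}{n}\log\int_{\mathcal{X}(K_1,K_2)}L_n(b)\,\Pi(\mathrm{d}b)\ \ge\ \frac{1}{n}\log\Pi(A_\eta)+\frac{1}{n}\int_{A_\eta}\log L_n(b)\,\frac{\Pi(\mathrm{d}b)}{\Pi(A_\eta)}.
\]
Writing $\log L_n(b)=\log\frac{\pi_b(X_0)}{\pi_{b_0}(X_0)}+\sum_{i=1}^n\log\frac{p_b(\Delta,X_{(i-1)\Delta},X_{i\Delta})}{p_{b_0}(\Delta,X_{(i-1)\Delta},X_{i\Delta})}$, I would use Fubini's theorem (whose applicability rests on the integrability estimates underlying Lemma \ref{lemma5.1b}, that is, finiteness of ${\rm K}(\mu_b,\mu_{b_0})$ and of ${\rm KL}(b_0,b)$ for $b\in A_\eta$) to interchange the $\Pi$-integral over $A_\eta$ with the summation, and then invoke the ergodic theorem for the stationary and ergodic $\mathbb{R}^d\times\mathbb{R}^d$-valued sequence $\{(X_{(i-1)\Delta},X_{i\Delta})\}_{i\ge 1}$ (as in the proof of Lemma \ref{lemma5.2}, this follows from a strong law of large numbers for ergodic sequences, e.g.\ Theorems 3.5.8 and 3.5.7 in \cite{stout}). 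Since $\ex_{P_{b_0}}[\log\frac{p_{b_0}(\Delta,X_0,X_\Delta)}{p_b(\Delta,X_0,X_\Delta)}]={\rm KL}(b_0,b)$ and the fixed $\Pi$-integrable quantity $\log(\pi_b/\pi_{b_0})(X_0)$ contributes $o(n)$, this yields
\[
\liminf_{n\to\infty}\frac{1}{n}\log\int_{\mathcal{X}(K_1,K_2)}L_n(b)\,\Pi(\mathrm{d}b)\ \ge\ -\int_{A_\eta}{\rm KL}(b_0,b)\,\frac{\Pi(\mathrm{d}b)}{\Pi(A_\eta)}\ \ge\ -\frac{\eta}{2}, \quad P_{b_0}\text{-a.s.},
\]
so that, $P_{b_0}$-a.s., $\int_{\mathcal{X}(K_1,K_2)}L_n(b)\,\Pi(\mathrm{d}b)\ge e^{-n\eta}$ for all $n$ large enough.

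Combining the two bounds, $\Pi(C_n|X_0,\ldots,X_{\Delta n})\le a_n e^{-n(c-\eta)}\to 0$, $P_{b_0}$-a.s., since $c-\eta>0$ and $a_n\to 0$; this is the assertion. The step I expect to be the main obstacle — though it is routine given the earlier results — is the rigorous justification of the Fubini interchange together with the ergodic-theorem application to the (only one-sided integrable, a priori) increments $\log(p_b/p_{b_0})$; finiteness of the Kullback--Leibler divergences from Lemma \ref{lemma5.1b} and Remark \ref{rem_drift_md}, together with the exponential decay of $\pi_{b_0}$, is precisely what makes this controllable, and these are the only places where the passage to dimension $d$ requires anything beyond a change of notation.
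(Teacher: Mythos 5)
Your overall strategy is the right one (it is the standard ``Kullback--Leibler property forces the evidence to decay slower than any exponential'' argument, and the paper itself only says the proof is an easy generalisation of Lemma 5.2 in van der Meulen and van Zanten), but the specific route you take — Jensen's inequality applied to the normalised restriction of $\Pi$ to $A_\eta$, followed by a Fubini interchange and the ergodic theorem for the $\Pi$-averaged log-likelihood increments — has a genuine gap at the stationary-density factor. For the transition part your interchange is fine: since $\ex_{P_{b_0}}[(\log (p_b/p_{b_0}))^+]\le \ex_{P_{b_0}}[p_b/p_{b_0}]=1$ and $\ex_{P_{b_0}}[\log(p_{b_0}/p_b)]={\rm KL}(b_0,b)<\eta/2$ on $A_\eta$, the averaged increment is $P_{b_0}$-integrable and Birkhoff applies. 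But for the initial term you assert that $\log(\pi_b/\pi_{b_0})(X_0)$ is ``$\Pi$-integrable'' and contributes $o(n)$; this is not guaranteed by the hypotheses. Membership in $A_\eta$ controls only the transition-density divergence ${\rm KL}(b_0,b)$, not ${\rm K}(\mu_{b_0},\mu_b)$, and in Definition \ref{drift_class_md} the constants $r_b,M_b,\alpha_b$ are not uniform in $b$, so there is no a priori reason why $\int_{A_\eta}\log\bigl(\pi_{b_0}(X_0)/\pi_b(X_0)\bigr)\,\Pi(\mathrm{d}b)$ should be $P_{b_0}$-a.s.\ finite (equivalently, why $\int_{A_\eta}{\rm K}(\mu_{b_0},\mu_b)\,\Pi(\mathrm{d}b)<\infty$). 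If that integral were infinite with positive probability, your Jensen lower bound would be $-\infty$ and the argument collapses at exactly the step you flagged as ``routine''.

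The gap is repairable in two standard ways, and either brings you back in line with the cited proof. (i) Drop Jensen and argue pointwise in $b$: for each fixed $b\in A_\eta$ the initial term is a single a.s.-finite random variable, so $n^{-1}\log L_n(b)\rightarrow -{\rm KL}(b_0,b)>-\eta$ $P_{b_0}$-a.s.\ by the ergodic theorem; a Fubini argument on $P_{b_0}\times\Pi$ shows this holds, for $P_{b_0}$-a.e.\ realisation, for $\Pi$-a.e.\ $b\in A_\eta$, and then Fatou's lemma gives $\liminf_n e^{n\eta}\int_{A_\eta}L_n(b)\,\Pi(\mathrm{d}b)=\infty$ a.s. This is the argument of Lemma 5.2 in van der Meulen and van Zanten that the paper invokes, and it needs no $\Pi$-integrability of the initial term. (ii) Alternatively, keep your Jensen route but first shrink $A_\eta$: since ${\rm K}(\mu_{b_0},\mu_b)<\infty$ for every $b$ (Remark \ref{rem_drift_md}, via Proposition 1.1 in \cite{gobet01}), continuity from below yields a finite $M$ with $\Pi\bigl(A_\eta\cap\{b:{\rm K}(\mu_{b_0},\mu_b)\le M\}\bigr)>0$, and on this smaller set all the Fubini and ergodic-theorem steps you propose go through. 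Without one of these repairs the proposal is incomplete; with either one it proves the lemma, the remaining steps (numerator bound, combination of the two bounds) being correct as written.
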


\begin{proof}
The proof is an easy generalisation of the proof of Lemma 5.2 in \cite{meulen}.
\end{proof}

\bibliographystyle{plainnat}

\end{document}